\newcommand{\Anti}{\operatorname{Anti}}
\newcommand{\curl}{\operatorname{curl}}
\newcommand{\Curl}{\operatorname{Curl}}
\newcommand{\conormal}{\bm{\partial}}
\newcommand{\dev}{\operatorname{dev}}
\renewcommand{\div}{\operatorname{div}}
\newcommand{\Div}{\operatorname{Div}}
\newcommand{\grad}{\operatorname{grad}}
\newcommand{\sym}{\operatorname{sym}}
\newcommand{\abs}[1]{\lvert #1 \rvert}
\newcommand{\R}{\mathbb R}
\newcommand{\ba}{\mathbf{a}}
\newcommand{\be}{\mathbf{e}}
\newcommand{\bn}{\mathbf{n}}
\newcommand{\bt}{\mathbf{t}}
\newtheorem{definition}{Definition}
\newtheorem{theorem}[definition]{Theorem}
\newtheorem{corollary}[definition]{Corollary}
\newtheorem{lemma}[definition]{Lemma}
\title[Conforming finite elements for $H(\sym \Curl)$ and $H(\dev \sym \Curl)$]
      {Conforming finite elements\\ for $H(\sym \Curl)$ and $H(\dev \sym \Curl)$}
\thanks{%
The author would like to thank Adam Sky and Patrizio Neff for the interesting discussions.
}
\author[Sander]{Oliver Sander}
\address{Oliver Sander\\
Technische Universität Dresden\\
Fakultät für Mathematik\\
Zellescher Weg 12--14\\
01069 Dresden\\
Germany}
\email{oliver.sander@tu-dresden.de}
\begin{document}

\begin{abstract}
 We construct conforming finite elements for the spaces $H(\sym \Curl)$ and
 $H(\dev \sym \Curl)$.  Those are spaces of matrix-valued functions with
 symmetric or deviatoric-symmetric $\Curl$ in a Lebesgue space, and they appear
 in various models of nonstandard solid mechanics. The finite elements
 are not $H(\Curl)$-conforming.  We show the construction, prove conformity
 and unisolvence, and point out optimal approximation error bounds.
\end{abstract}

\maketitle

\noindent
\emph{Keywords:} sym Curl, dev sym Curl, conforming finite elements, incompatible linear elasticity

\section{Introduction}

In \cite{lewintan_mueller_neff:2020}, \citeauthor{lewintan_mueller_neff:2020} introduced the spaces
\begin{align*}
 W^{1,p}(\sym \Curl; \Omega; \R^3)
 & \colonequals
 \big\{ P \in L^p(\Omega; \R^{3 \times 3}) \; : \; \sym \Curl P \in L^p(\Omega; \R^{3 \times 3}) \big\} \\
 \intertext{and}
 W^{1,p}(\dev \sym \Curl; \Omega; \R^3)
 & \colonequals
 \big\{ P \in L^p(\Omega; \R^{3 \times 3}) \; : \; \dev \sym \Curl P \in L^p(\Omega; \R^{3 \times 3}) \big\},
\end{align*}
where $L^p(\Omega;\R^{3 \times 3})$, $p \ge 1$, is the Lebesgue space of $\R^{3 \times 3}$-valued
functions that are $p$-integrable on a domain $\Omega$.  The operator $\Curl$ denotes the
classical $\curl$ operator acting row-wise on a matrix; to distinguish the two we write
the matrix form with an upper-case letter.
The operators $\dev$ and $\sym$ produce the deviatoric and symmetric parts
of a $3 \times 3$-matrix
\begin{equation*}
 \dev A \colonequals A - \frac{1}{3} \operatorname{trace} A
 \qquad \text{and} \qquad
 \sym A \colonequals \frac{1}{2} (A + A^T),
\end{equation*}
respectively.
For brevity, we will call the spaces above $H(\sym \Curl)$ and $H(\dev \sym \Curl)$
in this manuscript.

\citeauthor{lewintan_mueller_neff:2020} presented several potential applications from the field of solid mechanics.
For numerical simulations it is therefore of interest to construct conforming finite elements
for these spaces. As
\begin{equation}
\label{eq:hcurl_subsets}
 H(\Curl)
 \subset
 H(\sym \Curl)
 \qquad \text{and} \qquad
 H(\Curl)
 \subset
 H(\dev \sym \Curl),
\end{equation}
a possible candidate are finite elements that are row-wise $H(\curl)$-conforming
(for example, the Nédélec elements~\cite{nedelec:1980,nedelec:1986}).
However, the subset relations~\eqref{eq:hcurl_subsets} are strict, and more structure of the new spaces
can be captured by finite element spaces
that are larger, i.e., not necessarily subspaces of $H(\Curl)$.

The classical way to construct conforming finite elements for a particular Sobolev space is to combine piecewise
polynomials on a grid $\mathcal{T}$ by certain continuity conditions.  Define
\begin{equation*}
 \Anti : \R^3 \to \R^{3 \times 3}
 \qquad
 \Anti \ba
 \colonequals
 \begin{pmatrix}
  0 & -a_3 & a_2 \\
  a_3 & 0 & - a_1 \\
  -a_2 & a_1 & 0
 \end{pmatrix},
\end{equation*}
which implies
\begin{equation*}
 (\Anti \mathbf{a}) \mathbf{b} = \mathbf{a} \times \mathbf{b}
\end{equation*}
for every $\mathbf{a}, \mathbf{b} \in \R^3$.
On any element $T \in \mathcal{T}$, we can integrate by parts:
\begin{align*}
 \int_T Q : \sym \Curl U\,dx - \int_T \Curl \sym Q : U\,dx
 & =
 - \int_{\partial T} \sym (U \Anti \bn) Q\,d\sigma \\
\intertext{and}
 \int_T Q : \dev \sym \Curl U\,dx - \int_T \Curl \dev \sym Q : U\,dx
 & =
 - \int_{\partial T} \dev \sym (U \Anti \bn) Q\,d\sigma,
\end{align*}
if the functions $U, Q : T \to \R^{3 \times 3}$ are sufficiently smooth
(see \cite[Chapter~3]{lewintan_mueller_neff:2020}).
From these formulas, we get the following characterization result.

\begin{theorem}
\label{thm:jump_conditions}
 Let $\Omega$ be bounded.  A piecewise continuously differentiable function
 $U : \Omega \to \R^{3 \times 3}$ on a grid $\mathcal{T}$
 is in $H(\sym \Curl)$ if and only if
 \begin{equation}
 \label{eq:sym_jump_condition}
  \sym ([U] \Anti \bn) = 0
 \end{equation}
 on every inner face of the grid, where $[U]$ is the jump of $U$
 at the face, and $\bn$ is a face normal.
 The function is in $H(\dev \sym \Curl)$ if and only if
 \begin{equation*}
  \dev \sym ([U] \Anti \bn) = 0
 \end{equation*}
 on every inner face of the grid.
\end{theorem}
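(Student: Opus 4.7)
The plan is to identify the distributional $\sym \Curl U$ on $\Omega$ as the sum of a regular part (the elementwise $\sym \Curl U$, which is in $L^p$ because $U$ is piecewise $C^1$) and a singular part supported on the skeleton of interior faces; membership in $H(\sym \Curl)$ is then equivalent to vanishing of the singular part.

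Concretely, I would fix a test field $Q \in C_c^\infty(\Omega; \R^{3 \times 3})$, apply the given elementwise integration-by-parts identity on every $T \in \mathcal{T}$, and sum. The contributions on $\partial \Omega$ drop because $Q$ has compact support. On an interior face $F$ shared by two elements $T^{\pm}$ with outward normals $\pm \bn$, the two surface integrals combine (using $\Anti(-\bn) = -\Anti \bn$) into $\int_F \sym([U]\,\Anti \bn) : Q\,d\sigma$ with $[U] \colonequals U^+ - U^-$. The result is the global identity
\[
\int_\Omega U : \Curl \sym Q\,dx
= \sum_{T \in \mathcal{T}} \int_T Q : \sym \Curl U\,dx
+ \sum_{F} \int_F \sym([U]\,\Anti \bn) : Q\,d\sigma,
\]
valid for every $Q \in C_c^\infty(\Omega; \R^{3 \times 3})$.

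Recall that $U \in H(\sym \Curl)$ iff there is $V \in L^p(\Omega; \R^{3 \times 3})$ with $\int_\Omega V : Q\,dx = \int_\Omega U : \Curl \sym Q\,dx$ for all such $Q$; the formal adjoint $\Curl \sym$ here is read off from the same identity applied to $Q \in C_c^\infty(T)$. Testing against a $Q$ supported in a single element kills the face sum, and the fundamental lemma of the calculus of variations forces $V$ to agree almost everywhere on $T$ with the elementwise $\sym \Curl U$; varying $T$ extends this to all of $\Omega$. Substituting $V$ back into the global identity yields the equivalence between $U \in H(\sym \Curl)$ and the vanishing of $\sum_F \int_F \sym([U]\,\Anti \bn) : Q\,d\sigma$ for every $Q$. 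The $(\Leftarrow)$ direction is then immediate, and for $(\Rightarrow)$ I would localize $Q$ near an interior point of a single inner face and invoke the fundamental lemma on the face, using that $\sym([U]\,\Anti \bn)$ is symmetric so its contraction with $Q$ equals its contraction with $\sym Q$ and that every smooth symmetric matrix field on $F$ arises as the trace of such a $Q$.

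The second statement is obtained by repeating the argument verbatim, starting from the analogous integration-by-parts formula with $\dev \sym$ in place of $\sym$. The only delicate point is the face-localization step: one must arrange test fields concentrated near a single inner face and rich enough to test arbitrary symmetric (respectively, deviatoric-symmetric) matrix fields on it. Both are standard via cutoffs and smooth extension from $F$ into its tubular neighbourhood, so the real content of the theorem sits in the elementwise integration-by-parts identities of \cite{lewintan_mueller_neff:2020} together with the bookkeeping of opposite normals on adjacent elements.
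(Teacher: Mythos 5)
Your proposal is correct and is exactly the argument the paper intends: the paper states the theorem without a written proof, deferring entirely to the elementwise integration-by-parts identities, and your write-up (summing those identities over the grid with a compactly supported test field, identifying the distributional $\sym\Curl$ as regular part plus face jumps, and localizing first to elements and then to faces) is the standard way to make that deduction precise. The only point worth stating explicitly is that a nonzero surface-supported distribution cannot be represented by an $L^p$ function, which is what turns ``the singular part vanishes'' into the stated equivalence; your localization step supplies this.
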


The set of matrices that fulfill~\eqref{eq:sym_jump_condition}, but not the
corresponding condition $[U] \Anti \bn = 0$ for $H(\Curl)$ is spanned by the
identity matrix.  Multiples of the identity therefore play a special role,
and are treated separately in the finite element construction.

We have
\begin{equation*}
 H(\sym \Curl) \subset H(\dev \sym \Curl),
\end{equation*}
and by Theorem~4.1 of~\cite{lewintan_mueller_neff:2020} the two spaces
are not equal when $\Omega$ is bounded.
Curiously, however, Observation~2.3 in~\cite{lewintan_mueller_neff:2020} shows that
 \begin{equation}
  \label{eq:symanti_equals_devsymanti}
  \sym (U \Anti \bn) = 0
  \qquad \iff \qquad
  \dev \sym (U \Anti \bn) = 0
 \end{equation}
for any $U \in \R^{3 \times 3}$ and $\bn \in \R^3$, and therefore Theorem~\ref{thm:jump_conditions}
leads to identical finite element spaces for $H(\sym \Curl)$ and $H(\dev \sym \Curl)$.
In the following we will therefore only consider elements that are $H(\sym \Curl)$-conforming,
which are then automatically $H(\dev \sym \Curl)$-conforming.
Construction of finite element spaces that are $H(\dev \sym \Curl)$-conforming but not $H(\sym \Curl)$-conforming
will require nonstandard ideas.

The modern treatment of $H(\curl)$-conforming and related finite element spaces is a particularly
beautiful part of numerical mathematics, because it fits into the framework of finite element
exterior calculus~\cite{arnold_falk_winther:2006}.  This framework builds on the observation that
the space $H(\curl)$ forms part of the de Rham complex for the
classical vector calculus operators $\grad$, $\curl$, and $\div$.
A similar construction that involves the spaces $H(\sym \Curl)$ or $H(\dev \sym \Curl)$
is currently under investigation.
Closely related is the $\div\Div$ complex of~\citet{pauly_zulehner:2020}, which deals with
\begin{equation*}
 H(\sym \Curl) \cap \{ U \; : \; \operatorname{trace} U = 0 \}
\end{equation*}
instead of $H(\sym \Curl)$ itself.  There is a recent construction of a
finite element subcomplex of the $\div \Div$ complex in~\cite{hu_liang_ma:2021},
which reproduces the exact-sequence property of the original complex.

Besides dealing only with trace-free matrix functions, the finite element functions
of~\cite{hu_liang_ma:2021} are remarkably complicated.
In this paper we construct $H(\sym \Curl)$-conforming finite elements of any approximation order
that are much simpler.
We state the construction for tetrahedra, but a generalization to hexahedra
is straightforward.
The elements use full polynomial spaces, and the degrees of freedom are certain directional
point evaluations.  We show well-posedness, conformity and $H(\Curl)$-nonconformity,
and optimal interpolation error bounds.
However, the elements do not fulfill any exact-sequence properties,
and no inf--sup condition is shown either.  This is because the applications envisioned
in~\cite{lewintan_mueller_neff:2020} are not saddle-point problems, and therefore
these structural properties are of lesser importance.  The gain is a vastly
simplified construction compared to~\cite{hu_liang_ma:2021}, and finite elements
with fewer degrees of freedom per element.

The continuity conditions~\eqref{thm:jump_conditions} force our finite element functions
to be (almost) continuous at the grid vertices.  This is not surprising, as some
related finite elements also require vertex continuity~\cite{arnold_awanou_winther:2008,hu_liang_ma:2021}.
If the grid is such that all faces meeting at a vertex have one of three normals,
then we can get $H(\sym \Curl)$-conformity with less vertex continuity.
This alternative construction is described in Chapter~\ref{sec:hexahedral_elements}.

\section{Conformity-preserving degrees of freedom}

The degrees of freedom of our element are certain point evaluations. As a preparatory step
we therefore first consider conditions that ensure conformity at a single point.

\subsection{Face degrees of freedom}

We begin by conditions for points on a face of an element.  To every face $F$ in the
(three-dimensional) grid we associate a unit normal vector $\bn$, whose orientation
does not change throughout this manuscript.  Although normality is not used by the
following lemma, the vector $\bn$ that appears there will later be that face normal.

\begin{lemma}[Conformity]
 \label{lem:face_degree_conformity}
 Let $\ba_1, \ba_2, \bn$ be a basis of $\R^3$, and let $U \in \R^{3 \times 3}$.
 If
 \begin{alignat*}{6}
   \ba_1^T U (\bn \times \ba_1) & = 0 && \qquad\qquad &
   \ba_2^T U (\bn \times \ba_2) & = 0 \\
   \ba_1^T U (\bn \times \ba_2) + \ba_2^T U (\bn \times \ba_1) & = 0\\
   \bn^T U (\bn \times \ba_1) & = 0 && &
   \bn^T U (\bn \times \ba_2) & = 0,
 \end{alignat*}
 then
 \begin{equation}
 \label{eq:face_degree_conformity}
  \sym (U \Anti \bn) = 0.
 \end{equation}
\end{lemma}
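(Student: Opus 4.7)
The plan is to recognize that the six scalar conditions in the hypothesis are precisely the coefficients appearing when the quadratic form $\bx \mapsto \bx^T U (\bn \times \bx)$ is expanded in the basis $\ba_1, \ba_2, \bn$.

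First I would reduce $\sym(U \Anti \bn) = 0$ to a pointwise scalar statement. Using $(\Anti \bn)\mathbf{b} = \bn \times \mathbf{b}$, one has $\bx^T (U \Anti \bn) \by = \bx^T U (\bn \times \by)$ for all $\bx, \by \in \R^3$. A $3 \times 3$ matrix $M$ has $\sym M = 0$ if and only if $\bx^T M \bx = 0$ for every $\bx$, so it suffices to show
\begin{equation*}
 \bx^T U (\bn \times \bx) = 0 \qquad \text{for all } \bx \in \R^3.
\end{equation*}

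Next I would exploit that $\ba_1, \ba_2, \bn$ is a basis by writing $\bx = \alpha \ba_1 + \beta \ba_2 + \gamma \bn$. Since $\bn \times \bn = 0$, the right factor becomes
\begin{equation*}
 \bn \times \bx = \alpha (\bn \times \ba_1) + \beta (\bn \times \ba_2),
\end{equation*}
so the $\gamma$-component of $\bx$ does not appear there. Expanding the quadratic form yields exactly six terms, whose coefficients are
\begin{equation*}
 \alpha^2,\ \beta^2,\ \alpha\beta,\ \alpha\gamma,\ \beta\gamma,
\end{equation*}
multiplied respectively by $\ba_1^T U(\bn \times \ba_1)$, $\ba_2^T U(\bn \times \ba_2)$, $\ba_1^T U(\bn \times \ba_2) + \ba_2^T U(\bn \times \ba_1)$, $\bn^T U(\bn \times \ba_1)$, and $\bn^T U(\bn \times \ba_2)$. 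Each of these is one of the five hypotheses, so the entire sum vanishes. Since this holds for arbitrary $\alpha, \beta, \gamma$, we conclude $\bx^T U(\bn \times \bx) = 0$ for every $\bx$ and hence $\sym(U \Anti \bn) = 0$.

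There is no real obstacle here; the only subtlety is noticing that the cross product annihilates the $\bn$-component on the right, which is exactly why no conditions involving $\bn \times \bn$ need to be imposed and why the five listed conditions are enough (rather than six). The proof is essentially a bookkeeping exercise once the quadratic-form reformulation is in place.
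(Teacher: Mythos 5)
Your proof is correct. It reaches the same computational core as the paper's proof --- the five hypotheses are exactly the evaluations of the bilinear form $(\mathbf{x},\mathbf{y}) \mapsto \mathbf{x}^T U(\bn\times\mathbf{y})$ on symmetrized pairs of basis vectors, and the sixth potential condition disappears because $\bn\times\bn=0$ --- but the packaging is genuinely different. The paper argues dually: it rewrites $\sym(U\Anti\bn)=0$ as $(U\Anti\bn):Q=0$ for all $Q$ in the symmetric matrices $\mathbb{S}^3$, constructs an explicit basis $Q_1,\dots,Q_6$ of $\mathbb{S}^3$ from $\ba_1,\ba_2,\bn$, spends a paragraph verifying its linear independence via the factorization $Q_i = AQ_i^eA^T$, and then matches $Q_1,\dots,Q_5$ with the hypotheses while $Q_6=\bn\otimes\bn$ is free. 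You instead use the fact that $\sym M = 0$ iff the quadratic form $\mathbf{x}^T M\mathbf{x}$ vanishes identically (via $\mathbf{x}^T M\mathbf{x} = \mathbf{x}^T(\sym M)\mathbf{x}$ and polarization), expand $\mathbf{x}$ in the basis, and read off the five coefficients. This buys you a shorter argument that sidesteps the explicit basis of $\mathbb{S}^3$ and its independence check entirely; what the paper's version buys in exchange is that the six test matrices $Q_i$ are reused immediately afterwards, both in the remark about the $\dev\sym$ case and as the template for the degrees of freedom in Definition~\ref{def:simplex_element}, so the explicit basis is doing expository work beyond this one lemma. One cosmetic slip: you say the expansion ``yields exactly six terms'' and then list five monomials; the six raw products collapse to five after the $\alpha\beta$ and $\beta\alpha$ terms are combined, which is presumably what you meant.
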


\begin{proof}
 Equation~\eqref{eq:face_degree_conformity} is equivalent to
 \begin{equation*}
  \sym (U \Anti \bn) : Q = 0
  \qquad
  \forall Q \in \R^{3 \times 3},
 \end{equation*}
 and this in turn is equivalent to
 \begin{equation}
 \label{eq:conformity_v3}
  (U \Anti \bn) : Q = 0
  \qquad
  \forall Q \in \mathbb{S}^3,
 \end{equation}
 where $\mathbb{S}^3$ is the set of symmetric $3 \times 3$ matrices.
 By linearity, it is sufficient to test~\eqref{eq:conformity_v3} only for six matrices
 $Q_1, \dots, Q_6 \in \R^{3 \times 3}$
 that form a basis of $\mathbb{S}^3$.  One such basis is
 \begin{alignat*}{9}
  Q_1 & = \ba_1 \otimes \ba_1 && \qquad &
  Q_2 & = \ba_2 \otimes \ba_2 && \qquad &
  Q_3 & = \sym(\ba_1 \otimes \ba_2) \\
  Q_4 & = \sym (\ba_1 \otimes \bn) && &
  Q_5 & = \sym (\ba_2 \otimes \bn) && &
  Q_6 & = \bn \otimes \bn.
 \end{alignat*}
 Indeed, these matrices are linearly independent.  To see this, let $\alpha_1,\dots,\alpha_6 \in \R$
 be such that
 \begin{equation}
  \label{eq:linear_independence}
  \sum_{i=1}^6 \alpha_i Q_i = 0.
 \end{equation}
 Then, write $Q_i = A Q_i^e A^T$, where $A$ is the matrix with columns $\ba_1$, $\ba_2$, $\bn$,
 and $Q_1^e, \ldots, Q_6^e$ are the canonical basis vectors
 \begin{alignat*}{9}
  Q_1^e & = \be_1 \otimes \be_1 && \qquad &
  Q_2^e & = \be_2 \otimes \be_2 && \qquad &
  Q_3^e & = \sym(\be_1 \otimes \be_2) \\
  Q_4^e & = \sym (\be_1 \otimes \be_3) && &
  Q_5^e & = \sym (\be_2 \otimes \be_3) && &
  Q_6^e & = \be_3 \otimes \be_3.
 \end{alignat*}
 Equation~\eqref{eq:linear_independence} implies $A(\sum_{i=1}^6 \alpha_i Q_i^e)A^T = 0$,
 and as $A$ is invertible and the $Q_i^e$ are trivially linearly independent,
 it follows that $\alpha_1 = \ldots = \alpha_6 = 0$.

 As it turns out, not all six test matrices are required.
 Note that for any two vectors $\mathbf{a}, \mathbf{b} \in \R^3$ we get
 \begin{equation}
 \label{eq:contraction_as_matrix_product}
  U \Anti \bn : (\mathbf{a} \otimes \mathbf{b})
  =
  \sum_{i,j=1}^3 (U \Anti \bn)_{ij} \cdot a_ib_j
  =
  \mathbf{a}^T (U \Anti \bn) \mathbf{b},
 \end{equation}
 and therefore
 \begin{equation}
 \label{eq:normal_normal_dof_is_zero}
  (U \Anti \bn) : Q_6
  =
  \bn^T (U \Anti \bn) \bn
  =
  0.
 \end{equation}
 As a consequence, the other five basis vectors are enough to ensure~\eqref{eq:face_degree_conformity}.
 Using~\eqref{eq:contraction_as_matrix_product} to compute
\begin{alignat*}{2}
  (U \Anti \bn) : Q_1  & = \ba_1^T (U \Anti \bn) \ba_1  && = \ba_1^T U (\bn \times \ba_1) \\
  (U \Anti \bn) : Q_2  & = \ba_2^T (U \Anti \bn) \ba_2  && = \ba_2^T U (\bn \times \ba_2) \\
  (U \Anti \bn) : Q_3  & = \frac{1}{2} \big[ \ba_1^T (U \Anti \bn) \ba_2 + \ba_2^T (U \Anti \bn) \ba_1 \big] \\
                                               & \hspace{0.28\textwidth} = \frac{1}{2} \big[ \ba_1^T U (\bn \times \ba_2) && + \ba_2^T U (\bn \times \ba_1) \big] \\
  (U \Anti \bn) : Q_4  & = \frac{1}{2} \big[ \ba_1^T (U \Anti \bn) \bn + \bn^T (U \Anti \bn) \ba_1 \big]
                                              & & = \frac{1}{2} \bn^T U (\bn \times \ba_1) \\
  (U \Anti \bn) : Q_5  & = \frac{1}{2} \big[ \ba_2^T (U \Anti \bn) \bn + \bn^T (U \Anti \bn) \ba_2 \big]
                                              & & = \frac{1}{2} \bn^T U (\bn \times \ba_2)
\end{alignat*}
we get the assertion.
\end{proof}

Note how we would get the same set of equations when trying to satisfy
the seemingly weaker condition $\dev \sym (U \Anti \bn) = 0$.  In that case,
provided that $\ba_1$, $\ba_2$, $\bn$ have equal length,
one possible set of test matrices for the five-dimensional space
$\{ Q \in \mathbb{S}^3\; : \; \operatorname{trace} Q = 0 \}$ would be $Q_3$, $Q_4$, $Q_5$ from above,
together with $\ba_1 \otimes \ba_1 - \ba_2 \otimes \ba_2$ and
$\ba_2 \otimes \ba_2 - \bn \otimes \bn$.
However, by~\eqref{eq:normal_normal_dof_is_zero}, testing with
the latter is equivalent to testing with $\ba_2 \otimes \ba_2 = Q_2$, which, together with
$\ba_1 \otimes \ba_1 - \ba_2 \otimes \ba_2$ spans the same space as $Q_1$ and $Q_2$.
In view of~\eqref{eq:symanti_equals_devsymanti} this is no surprise.

\begin{lemma}[Unisolvence]
 \label{lem:face_unisolvence}
 If the five conditions of Lemma~\ref{lem:face_degree_conformity} hold,
 and if additionally
 \begin{alignat*}{6}
  \ba_1^T U (\bn \times \ba_2) - \ba_2^T U (\bn \times \ba_1) & = 0  & & \qquad &
  \ba_1^T U \bn & = 0\\
  \ba_2^T U \bn & = 0 & & &
  \bn^T U \bn & = 0,
 \end{alignat*}
 then
 \begin{equation*}
  U = 0.
 \end{equation*}
\end{lemma}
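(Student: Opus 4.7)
The plan is to show that $U$ vanishes on a basis of $\R^3$ by checking that $\mathbf{w}^T U \mathbf{v} = 0$ for $\mathbf{w}$ ranging over a basis and $\mathbf{v}$ ranging over another basis. A matrix in $\R^{3 \times 3}$ has nine entries, and the statement gives nine scalar conditions, so a counting argument suggests this can work if the conditions decouple cleanly over well-chosen test vectors.

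First I would observe that $\{\bn \times \ba_1,\, \bn \times \ba_2,\, \bn\}$ is a basis of $\R^3$: dotting any linear combination with $\bn$ kills the cross-product terms and forces the $\bn$-coefficient to vanish, after which $\bn \times (\alpha \ba_1 + \beta \ba_2) = 0$ forces $\alpha \ba_1 + \beta \ba_2 \in \operatorname{span}\{\bn\}$, and since $\{\ba_1,\ba_2,\bn\}$ is a basis this gives $\alpha = \beta = 0$. Hence it suffices to prove $\mathbf{w}^T U \mathbf{v} = 0$ for $\mathbf{w} \in \{\ba_1,\ba_2,\bn\}$ and $\mathbf{v} \in \{\bn \times \ba_1,\, \bn \times \ba_2,\, \bn\}$, a total of nine bilinear forms.

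Next I would match up the nine hypotheses with the nine target bilinear forms. Three of the new conditions directly supply $\ba_1^T U \bn = \ba_2^T U \bn = \bn^T U \bn = 0$, covering the three targets with $\mathbf{v} = \bn$. Four of the conditions from Lemma~\ref{lem:face_degree_conformity} directly supply $\ba_1^T U(\bn \times \ba_1)$, $\ba_2^T U(\bn \times \ba_2)$, $\bn^T U(\bn \times \ba_1)$, $\bn^T U(\bn \times \ba_2)$. The only remaining targets are $\ba_1^T U(\bn \times \ba_2)$ and $\ba_2^T U(\bn \times \ba_1)$; these are obtained by taking the sum and difference of the ``off-diagonal'' equation of Lemma~\ref{lem:face_degree_conformity} and the new ``difference'' condition, which together form an invertible $2 \times 2$ linear system in these two quantities whose unique solution is $(0,0)$.

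There is no serious obstacle: the argument is essentially a bookkeeping check that the given nine equations are in one-to-one correspondence (up to an invertible $2 \times 2$ change of basis on the off-diagonal pair) with the nine components of $U$ in the chosen bases. The only non-trivial step is the linear-independence verification of the cross-product basis, which uses exactly the hypothesis that $\{\ba_1,\ba_2,\bn\}$ is a basis.
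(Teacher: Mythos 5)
Your proof is correct and follows essentially the same route as the paper: both reduce the off-diagonal pair by adding and subtracting the two mixed conditions, and both conclude via $A^T U B = 0$ with $A$ the matrix of $\{\ba_1,\ba_2,\bn\}$ and $B$ the matrix of the cross-product basis, whose invertibility you verify by the same linear-independence argument.
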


\begin{proof}
By adding and subtracting the third condition of Lemma~\ref{lem:face_degree_conformity}
and the first condition of Lemma~\ref{lem:face_unisolvence} we find that
the two are equivalent to
\begin{equation*}
 \ba_1^T U (\bn \times \ba_2) = 0
 \qquad \text{and} \qquad
 \ba_2^T U (\bn \times \ba_1) = 0.
\end{equation*}
Let $A$ be the matrix with columns $\ba_1$, $\ba_2$, $\bn$, and $B$ the matrix
with columns $\ba_1 \times \bn$, $\ba_2 \times \bn$, $\bn$.
Then the nine conditions can be written in matrix form
\begin{equation}
 \label{eq:face_conformity_matrix_form}
 A^T U B = 0.
\end{equation}
$A$ is invertible because the columns $\ba_1$, $\ba_2$, $\bn$ form a basis.
To see that $B$ is invertible, note first that both $\bn \times \ba_1$ and $\bn \times \ba_2$
are in the plane orthogonal to $\bn$.  Secondly, they are not collinear, because
otherwise $\ba_1$, $\ba_2$, $\bn$ would all be in a common plane.
Multiplying~\eqref{eq:face_conformity_matrix_form} with $A^{-T}$ from the left
and with $B^{-1}$ from the right then yields the assertion.
\end{proof}

\subsection{Edge degrees of freedom}
\label{sec:edge_dofs}

We now consider values on an edge $E$ of an element.
Here, the conformity conditions for the two adjacent faces $F_1$ and $F_2$ interact.
We equip every edge of the grid with a unit tangent vector $\bt_E$, and two
further vectors $\bn_{E,1}$ and $\bn_{E,2}$ that span the plane orthogonal to $E$.
For each pair $(F_i,E)$ of face $F_i$, $i=1,2$ and adjacent edge $E$ we define a unit conormal
$\conormal_i \colonequals \bt_E \times \bn_i$.  The conormal $\conormal_i$
is tangent to $F_i$ and orthogonal to $\bt_E$.

To state conformity-ensuring conditions we again use Lemma~\ref{lem:face_degree_conformity}.
For face $F_1$ with normal $\bn_1$ we set $\ba_1 = \bt_E$ and $\ba_2 = \conormal_1$.
For face $F_2$ with
normal $\bn_2$ we set $\ba_1 = \bt_E$ and $\ba_2 = \conormal_2$.  The conditions
for the two faces are:
\begin{center}
 \begin{tikzpicture}
  \node at (-1,3.3) {$F_1$};
  \node at (4.5,3.3) {$F_2$};

  \draw (-3.5,3.0) -- ++(10.5,0);

  \node at (-1.3,2.6)  {$\bt_E^T U (\bn_1 \times \bt_E) = 0$};
  \node at (4.8,2.6)   {$\bt_E^T U (\bn_2 \times \bt_E) = 0$};

  \node (dof2left) at (-1.3,1.95) {$\conormal_1^T U (\bn_1 \times \conormal_1) = 0$};
  \node (dof2right) at (4.8,1.95) {$\conormal_2^T U (\bn_2 \times \conormal_2) = 0$};

  \node at (-1.7,1.3)  {$\bt_E^T U (\bn_1 \times \conormal_1) + \conormal_1^T U (\bn_1 \times \bt_E) = 0$};
  \node at (5.2,1.3)   {$\bt_E^T U (\bn_2 \times \conormal_2) + \conormal_2^T U (\bn_2 \times \bt_E) = 0$};

  \node at (-1.3,0.65) {$\bn_1^T U (\bn_1 \times \bt_E) = 0$};
  \node at (4.8, 0.65) {$\bn_2^T U (\bn_2 \times \bt_E) = 0$};

  \node (dof5left) at (-1.3,0) {$\bn_1^T U (\bn_1 \times \conormal_1) = 0$};
  \node (dof5right) at (4.8,0) {$\bn_2^T U (\bn_2 \times \conormal_2) = 0$.};

  \draw [<->] (dof2left) edge [out=0,in=180] (dof5right);
  \draw [<->] (dof5left) edge [out=0,in=180] (dof2right);
 \end{tikzpicture}

\end{center}
Together, these are more conditions than there are variables, but
as it turns out we can unify the two pairs of conditions marked by arrows.
We first assign the others to the faces meeting at $E$.  We get
\begin{itemize}
 \item For the face $F_1$:
  \begin{subequations}
  \begin{align}
   \label{eq:edge_face_dof_F11}
   \bt_E^T U (\bn_1 \times \bt_E) & = 0\\
   \label{eq:face_dof_F}
   \bt_E^T U (\bn_1 \times \conormal_1) + \conormal_1^T U (\bn_1 \times \bt_E) & = 0\\
   \label{eq:edge_face_dof_F12}
   \bn_1^T U (\bn_1 \times \bt_E) & = 0
  \end{align}
  \end{subequations}

 \item For the face $F_2$:
  \begin{subequations}
  \begin{align}
   \label{eq:edge_face_dof_F21}
   \bt_E^T U (\bn_2 \times \bt_E) & = 0 \\
   \label{eq:face_dof_tildeF}
   \bt_E^T U (\bn_2 \times \conormal_2) + \conormal_2^T U (\bn_2 \times \bt_E) & = 0\\
   \label{eq:edge_face_dof_F22}
   \bn_2^T U (\bn_2 \times \bt_E) & = 0
  \end{align}
  \end{subequations}
\end{itemize}
As $\bn_1 \times \conormal_1 = \bn_2 \times \conormal_2 = \bt_E$ by definition of the conormal,
the remaining four conditions are equivalent to requiring that $U \bt_E \in \R^3$
is collinear to $\bt_E$. This can be stated without explicit reference
to the faces $F_1$ and $F_2$ by replacing the four conditions with
\begin{align}
\label{eq:edge_edge_conditions}
 \bn_{E,1}^T U \bt_E = 0
 \qquad\qquad
 \bn_{E,2}^T U \bt_E = 0,
\end{align}
where $\bn_{E,1}$ and $\bn_{E,2}$ are the two vectors associated to the edge $E$ that span
the normal space of $\bt_E$.

By construction, for either face, the face and edge conditions are enough to control conformity.

\begin{lemma}[Conformity]
\label{lem:edge_conformity}
 Let $U \in \R^{3 \times 3}$, and let $F$ be a face with normal $\bn$,
 bordering the edge $E$.
 If the three face conditions
 \begin{align*}
  \bt_E^T U (\bn \times \bt_E) & = 0\\
  \bt_E^T U (\bn \times \conormal) + \conormal^T U (\bn \times \bt_E) & = 0\\
  \bn^T U (\bn \times \bt_E) & = 0
 \end{align*}
 and the two edge conditions~\eqref{eq:edge_edge_conditions} hold,
 then $\sym (U \Anti \bn) = 0$.
\end{lemma}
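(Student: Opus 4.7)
The plan is to reduce this lemma to Lemma~\ref{lem:face_degree_conformity} by identifying the correct basis $\ba_1, \ba_2, \bn$ and verifying its five hypotheses. I take $\ba_1 \colonequals \bt_E$ and $\ba_2 \colonequals \conormal = \bt_E \times \bn$. Since $\bt_E$ lies in the face $F$, it is orthogonal to $\bn$, and together $\bt_E$, $\conormal$, $\bn$ form a basis of $\R^3$ (in fact an orthogonal one).

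Three of the five hypotheses of Lemma~\ref{lem:face_degree_conformity} coincide verbatim with the three face conditions assumed here:
\begin{equation*}
 \ba_1^T U(\bn \times \ba_1) = \bt_E^T U(\bn \times \bt_E),
\end{equation*}
\begin{equation*}
 \ba_1^T U(\bn \times \ba_2) + \ba_2^T U(\bn \times \ba_1)
 = \bt_E^T U(\bn \times \conormal) + \conormal^T U(\bn \times \bt_E),
\end{equation*}
and $\bn^T U(\bn \times \ba_1) = \bn^T U(\bn \times \bt_E)$. The two remaining hypotheses are
\begin{equation*}
 \ba_2^T U(\bn \times \ba_2) = \conormal^T U(\bn \times \conormal)
 \qquad \text{and} \qquad
 \bn^T U(\bn \times \ba_2) = \bn^T U(\bn \times \conormal),
\end{equation*}
and these are precisely the places where the edge conditions~\eqref{eq:edge_edge_conditions} will enter.

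The key computation is the double cross product
\begin{equation*}
 \bn \times \conormal = \bn \times (\bt_E \times \bn) = (\bn \cdot \bn)\bt_E - (\bn \cdot \bt_E)\bn = \bt_E,
\end{equation*}
using $\abs{\bn} = 1$ and $\bn \perp \bt_E$. Hence the two missing hypotheses reduce to $\conormal^T U \bt_E = 0$ and $\bn^T U \bt_E = 0$. Both $\conormal$ and $\bn$ lie in the plane orthogonal to $\bt_E$, so each is a linear combination of $\bn_{E,1}$ and $\bn_{E,2}$; the edge conditions~\eqref{eq:edge_edge_conditions} then give exactly what is needed. Applying Lemma~\ref{lem:face_degree_conformity} finishes the proof.

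I do not expect any real obstacle here: the lemma is essentially a bookkeeping result, and the only nontrivial step is spotting the identity $\bn \times \conormal = \bt_E$ that lets the edge conditions absorb the two face-degree hypotheses not already assumed.
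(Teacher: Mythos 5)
Your proposal is correct and follows essentially the same route as the paper: both reduce the claim to Lemma~\ref{lem:face_degree_conformity} with the basis $\bt_E$, $\conormal$, $\bn$, and both use the identity $\bn \times \conormal = \bt_E$ together with the fact that $\conormal$ and $\bn$ span the orthogonal complement of $\bt_E$ to recover the two missing hypotheses from the edge conditions~\eqref{eq:edge_edge_conditions}. Your write-up just makes explicit the equivalence that the paper states in one line.
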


\begin{proof}
 The edge conditions~\eqref{eq:edge_edge_conditions} are equivalent to
 \begin{equation*}
  \conormal^T U (\bn \times \conormal) = 0
  \qquad \qquad
  \bn^T U (\bn \times \conormal) = 0.
 \end{equation*}
 Together with the three face conditions they form a set of conditions
 as given by Lemma~\ref{lem:face_degree_conformity} for the three vectors $\bt_E$, $\conormal$, $\bn$.
 The assertion then follows from Lemma~\ref{lem:face_degree_conformity}.
\end{proof}

However, the eight conditions for $F_1$, $F_2$, and $E$ together are not enough to uniquely determine
the value of $U \in \R^{3 \times 3}$.  The joint kernel is spanned by
the identity matrix.  Indeed, let $U$ be the identity
matrix.  Then $\bt_E^T U (\bn_1 \times \bt_E) = \bt_E^T(\bn_1 \times \bt_E) = 0$,
and likewise for the five other conditions consisting of only one addend.
For the remaining condition for face $F_1$ we get
\begin{align*}
 \bt_E^T U(\bn_1 \times \conormal_1) + \conormal_1^T U (\bn_1 \times \bt_E)
 & =
 \bt_E^T (\bn_1 \times \conormal_1) + \bt_E^T (\conormal_1 \times \bn_1)
 = 0,
\end{align*}
by invariance under circular shift of the triple product,
and likewise for face $F_2$. We therefore need one further condition
to control multiples of the identity matrix.  A suitable choice is
\begin{equation}
\label{eq:identity_dof}
 \bt_E^T U \bt_E = 0.
\end{equation}
This will later turn into a degree of freedom assigned to the element.

\begin{lemma}[Unisolvence]
\label{lem:edge_unisolvence}
 Let $U \in \R^{3 \times 3}$ be such that the
 conditions~\eqref{eq:edge_face_dof_F11}--\eqref{eq:edge_face_dof_F12},
 \eqref{eq:edge_face_dof_F21}--\eqref{eq:edge_face_dof_F22},
 the two conditions~\eqref{eq:edge_edge_conditions}, and
 the condition~\eqref{eq:identity_dof} holds. Then $U = 0$.
\end{lemma}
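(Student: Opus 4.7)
The plan is to apply Lemma~\ref{lem:edge_conformity} separately to the two faces meeting at $E$, and then to squeeze out $U = 0$ using the remaining edge and identity data. Concretely, Lemma~\ref{lem:edge_conformity} applied to $F_1$ with the face conditions \eqref{eq:edge_face_dof_F11}--\eqref{eq:edge_face_dof_F12} and the edge conditions \eqref{eq:edge_edge_conditions} yields $\sym(U \Anti \bn_1) = 0$; analogously, the face conditions \eqref{eq:edge_face_dof_F21}--\eqref{eq:edge_face_dof_F22} give $\sym(U \Anti \bn_2) = 0$. In other words, both $U \Anti \bn_1$ and $U \Anti \bn_2$ are antisymmetric $3 \times 3$ matrices.

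Next, I would extract $U \bt_E = 0$ from the edge-type data alone. The two edge conditions \eqref{eq:edge_edge_conditions} say that $U \bt_E$ is orthogonal to both $\bn_{E,1}$ and $\bn_{E,2}$, hence parallel to $\bt_E$; the identity condition \eqref{eq:identity_dof} then forces the scalar coefficient to vanish. Because $\bt_E = \bn_1 \times \conormal_1 = \bn_2 \times \conormal_2$, this translates into $(U \Anti \bn_1) \conormal_1 = 0$ and $(U \Anti \bn_2) \conormal_2 = 0$.

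The crux is now to promote antisymmetry plus one kernel vector into the vanishing of the whole matrix. Any antisymmetric $3\times 3$ matrix has the form $\Anti \mathbf{v}$ for some $\mathbf{v} \in \R^3$, and such a matrix annihilates $\conormal_1$ exactly when $\mathbf{v}$ is collinear with $\conormal_1$; therefore $U \Anti \bn_1 = \mu \Anti \conormal_1$ for some $\mu \in \R$. Evaluating both sides at $\bn_1$ and using $(U \Anti \bn_1) \bn_1 = U(\bn_1 \times \bn_1) = 0$ together with $\conormal_1 \times \bn_1 = -\bt_E$ yields $\mu \bt_E = 0$, hence $U \Anti \bn_1 = 0$; the same argument gives $U \Anti \bn_2 = 0$. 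Each of these says that $U$ vanishes on the whole two-dimensional plane orthogonal to $\bn_i$, and since $F_1$ and $F_2$ are not coplanar the vectors $\bn_1, \bn_2$ are linearly independent, so $\bn_1^\perp + \bn_2^\perp = \R^3$ and $U = 0$ follows. The only slightly delicate step is the algebraic identification of antisymmetric matrices with $\Anti$ of a vector; the rest amounts to careful bookkeeping of the conditions.
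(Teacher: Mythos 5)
Your proof is correct, but it follows a genuinely different route from the paper's. The paper argues purely by linear algebra on the nine scalar conditions: it subtracts \eqref{eq:identity_dof} from \eqref{eq:face_dof_F} and \eqref{eq:face_dof_tildeF} to isolate $\conormal_i^T U(\bn_i \times \bt_E) = 0$, groups the resulting nine equations into three vector equations $A_i^T U \mathbf{b}_i = 0$ with invertible matrices $A_i$, and concludes $U\mathbf{b}_i = 0$ for the three linearly independent vectors $\bn_1 \times \bt_E$, $\bn_2 \times \bt_E$, $\bt_E$. You instead reuse Lemma~\ref{lem:edge_conformity} as a black box to get that $U\Anti\bn_1$ and $U\Anti\bn_2$ are antisymmetric, derive $U\bt_E = 0$ from the edge and identity conditions, and then classify: an antisymmetric matrix annihilating $\conormal_i$ must be $\mu\Anti\conormal_i$, and evaluating at $\bn_i$ kills $\mu$, so $U$ vanishes on $\bn_1^\perp + \bn_2^\perp = \R^3$. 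All steps check out (in particular $\bn_i \times \conormal_i = \bt_E$ and $\conormal_i \times \bn_i = -\bt_E$ are consistent with the definition $\conormal_i = \bt_E \times \bn_i$, and both proofs rely on the same geometric nondegeneracy, namely that $\bn_1$ and $\bn_2$ are not collinear). Your version is more conceptual --- it makes explicit that the residual freedom after enforcing conformity on both faces and $U\bt_E=0$ is precisely a multiple of $\Anti\conormal_i$, and hence why the single extra condition \eqref{eq:identity_dof} suffices --- at the cost of invoking the conformity lemma and the representation of antisymmetric matrices; the paper's version is more elementary and self-contained but less illuminating about where the single remaining degree of freedom lives.
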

\begin{proof}
Suppose that all nine conditions hold.
Subtracting~\eqref{eq:identity_dof} from~\eqref{eq:face_dof_F}
and~\eqref{eq:face_dof_tildeF} yields
\begin{equation*}
 \bt_E^T U \bt_E = 0
 \qquad \qquad
 \conormal_1^T U (\bn_1 \times \bt_E) = 0
 \qquad \qquad
 \conormal_2^T U (\bn_2 \times \bt_E) = 0.
\end{equation*}
These nine conditions can then be written as three vector equations
\begin{align*}
 \begin{pmatrix} \bt_E^T \\ \bn_1^T \\ \conormal_1^T \end{pmatrix}
 U
 (\bn_1 \times \bt_E) & = 0 \in \R^3 \\
 \begin{pmatrix} \bt_E^T \\ \bn_2^T \\ \conormal_2^T \end{pmatrix}
 U
 (\bn_2 \times \bt_E) & = 0 \in \R^3 \\
 \begin{pmatrix} \bt_E^T \\ \bn_{E,1}^T \\ \bn_{E,2}^T \end{pmatrix}
 U \bt_E & = 0 \in \R^3.
\end{align*}
Each of the matrices on the left is invertible, and we therefore conclude that
\begin{align*}
 U (\bn_1 \times \bt_E)
 =
 U (\bn_2 \times \bt_E)
 =
 U \bt_E = 0 \in \R^3.
\end{align*}
This can be rewritten as
\begin{equation*}
 U \Big(\bn_1 \times \bt_E \;\big|\; \bn_2 \times \bt_E \;\big|\; \bt_E\Big) = 0 \in \R^{3 \times 3},
\end{equation*}
and since the vectors $\bn_1 \times \bt_E$, $\bn_2 \times \bt_E$ and $\bt_E$
are linearly independent we obtain that $U = 0$.
\end{proof}

\subsection{Vertex degrees of freedom}
\label{sec:vertex_dofs}

Let $V$ be a vertex of an element, and let $F_1$, $F_2$, $F_3$ be the faces that meet at $V$.
The three corresponding face normals $\bn_1$, $\bn_2$, $\bn_3$ form a basis, and
we can therefore formulate the conformity conditions
of Lemma~\ref{lem:face_degree_conformity} for each of the three faces
in terms of $\bn_1$, $\bn_2$, and $\bn_3$.  As two conditions can be shared
for each pair of adjacent faces, we obtain nine degrees of freedom in total,
and a natural assignment to the edges and faces at the vertex~$V$:

\begin{center}
 \begin{tikzpicture}
  \fill [gray!20] (-2, 1) -- (0,0) -- (2,1) -- cycle;
  \fill [gray!30] (-2, 1) -- (0,0) -- (0,-1.6) -- cycle;
  \fill [gray!10] ( 2, 1) -- (0,0) -- (0,-1.6) -- cycle;

  \draw (-2, 1) -- (0,0);
  \draw ( 2, 1) -- (0,0);
  \draw ( 0, -1.6) -- (0,0);

  \node at (0,0.6) {$F_1$};
  \node at ( 0,  2.8) {$\bn_2^T U (\bn_1 \times \bn_3) + \bn_3^T U (\bn_1 \times \bn_2) = 0$};

  \node at (-0.7,-0.3) {$F_2$};
  \node at (-4.3, -0.8) {$\bn_3^T U (\bn_2 \times \bn_1) + \bn_1^T U (\bn_2 \times \bn_3) = 0$};

  \node at (0.7,-0.3) {$F_3$};
  \node at ( 4.3, -0.8) {$\bn_1^T U (\bn_3 \times \bn_2) + \bn_2^T U (\bn_3 \times \bn_1) = 0$};

  \node at (-2.8, 1.9) {$\bn_1^T U (\bn_1 \times \bn_2) = 0$};
  \node at (-2.8, 1.3) {$\bn_2^T U (\bn_1 \times \bn_2) = 0$};

  \node at ( 2.8, 1.9) {$\bn_1^T U (\bn_1 \times \bn_3) = 0$};
  \node at ( 2.8, 1.3) {$\bn_3^T U (\bn_1 \times \bn_3) = 0$};

  \node at (   0, -2) {$\bn_2^T U (\bn_2 \times \bn_3) = 0$};
  \node at (   0, -2.6) {$\bn_3^T U (\bn_2 \times \bn_3) = 0$};

 \end{tikzpicture}

\end{center}

Using the same trick as in Section~\ref{sec:edge_dofs}, the edge conditions can be
formulated in terms of the edge tangents $\bt_{E_i}$ and edge normal vectors
$\bn_{E_i,1}$, $\bn_{E_i,2}$.  However, the nine conditions do not form a unisolvent
set, because they do not control the identity matrix.
This is because the three face degrees of freedom are not independent---any two
imply the third one.

A natural way out would be to take any two, and assign them to the vertex itself.
However, for an unstructured grid it is unclear how to do this in a way that is independent of the geometry
of~$T$.  We are therefore forced to abandon the approach and force full continuity
at the vertex---with the exception of the subspace spanned by the identity matrix.
To this end we set up the continuity conditions
\begin{equation}
\label{eq:vertex_offdiagonal_conditions}
\begin{aligned}
 U_{12} & = 0  & \qquad U_{13} & = 0 & \qquad  U_{23} & = 0 \\
 U_{21} & = 0  &        U_{31} & = 0 &         U_{32} & = 0
\end{aligned}
\end{equation}
for the off-diagonals, together with
\begin{equation}
\label{eq:vertex_diagonal_conditions}
 U_{11} - U_{22} = 0
 \qquad \qquad
 U_{22} - U_{33} = 0
\end{equation}
for the diagonal entries, and
\begin{equation}
\label{eq:vertex_identity_condition}
 U_{11} + U_{22} + U_{33} = 0
\end{equation}
for multiples of the identity.

The first two sets of conditions then directly imply conformity.
\begin{lemma}[Conformity]
 \label{lem:vertex_conformity}
 Let $U \in \R^{3 \times 3}$ be such that Conditions~\eqref{eq:vertex_offdiagonal_conditions}
 and~\eqref{eq:vertex_diagonal_conditions} hold.
 Then $\sym (U \Anti \bn) = 0$ für any $\bn \in \R^3$.
\end{lemma}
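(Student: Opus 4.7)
The plan is to observe that the hypotheses simply pin down $U$ to be a scalar multiple of the identity, after which the conclusion follows from the antisymmetry of $\Anti \bn$.

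First I would note that the six off-diagonal conditions~\eqref{eq:vertex_offdiagonal_conditions} force $U$ to be diagonal, while the two diagonal conditions~\eqref{eq:vertex_diagonal_conditions} force $U_{11} = U_{22} = U_{33}$. Writing this common value as $\alpha \in \R$, we obtain $U = \alpha I$.

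Next, since $\Anti \bn$ is antisymmetric for every $\bn \in \R^3$ by its very construction, the product $U \Anti \bn = \alpha \Anti \bn$ is antisymmetric as well. Taking the symmetric part of an antisymmetric matrix gives zero, so $\sym(U \Anti \bn) = 0$, which is the assertion.

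There is no real obstacle here; the argument is essentially a one-line computation. The only thing worth remarking is that condition~\eqref{eq:vertex_identity_condition} is deliberately not used, since it is precisely the extra degree of freedom needed to also pin down the identity component (and thus full unisolvence), not conformity.
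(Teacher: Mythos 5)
Your proof is correct, and it is the evident intended argument: the paper states this lemma without proof, and your observation that the hypotheses force $U = \alpha I$, whence $U \Anti \bn = \alpha \Anti \bn$ is antisymmetric and has vanishing symmetric part, is exactly the one-line computation the author leaves to the reader. Your closing remark about why~\eqref{eq:vertex_identity_condition} is reserved for unisolvence rather than conformity also matches the paper's discussion of the identity matrix spanning the kernel of the conformity conditions.
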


Together with~\eqref{eq:vertex_identity_condition}, which controls multiples
of the identity matrix, we obtain a unisolvent set of degrees of freedom.

\begin{lemma}[Unisolvence]
\label{lem:vertex_unisolvence}
 Let $U \in \R^{3 \times 3}$ be such that
 Conditions~\eqref{eq:vertex_offdiagonal_conditions}, \eqref{eq:vertex_diagonal_conditions},
 and~\eqref{eq:vertex_identity_condition} hold.  Then $U = 0$.
\end{lemma}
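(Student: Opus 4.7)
The plan is to proceed in three immediate steps, each peeling off one of the three blocks of conditions. First I would use the six off-diagonal conditions in~\eqref{eq:vertex_offdiagonal_conditions} to conclude that $U$ is diagonal, i.e., $U = \operatorname{diag}(U_{11}, U_{22}, U_{33})$.

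Next I would invoke the two diagonal conditions in~\eqref{eq:vertex_diagonal_conditions}, which give $U_{11} = U_{22}$ and $U_{22} = U_{33}$, hence $U_{11} = U_{22} = U_{33} =: \lambda$ for some $\lambda \in \R$. At this point $U = \lambda \operatorname{I}$ is a multiple of the identity matrix, which is precisely the subspace that the face, edge, and off-diagonal vertex conditions cannot detect (as discussed in the paragraph preceding the lemma).

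Finally, condition~\eqref{eq:vertex_identity_condition} reads $U_{11} + U_{22} + U_{33} = 3\lambda = 0$, so $\lambda = 0$ and consequently $U = 0$.

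There is no real obstacle here; the whole point of the construction in Section~\ref{sec:vertex_dofs} was to arrange the conditions so that~\eqref{eq:vertex_offdiagonal_conditions} and~\eqref{eq:vertex_diagonal_conditions} pin down $U$ up to multiples of the identity, and~\eqref{eq:vertex_identity_condition} then removes that one remaining degree of freedom. The proof is simply a matter of reading off the entries of $U$ in the order suggested by the three groups of conditions.
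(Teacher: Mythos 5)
Your proof is correct and is exactly the (elementary) argument intended: the nine conditions are just nine independent linear constraints on the nine entries of $U$, and the paper in fact omits the proof as immediate. Nothing further is needed.
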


\section{A tetrahedral element based on complete polynomials}
\label{sec:tetrahedral_element}

We can now construct the finite element for tetrahedra.  It is based on a full polynomial space,
and the degrees of freedom are directional point evaluations of the types developed
in the previous chapter.
In what follows, $\Pi_k(T; \R^{3 \times 3})$ is the space of $\R^{3 \times 3}$-valued polynomials
of order~$k$ or less on a tetrahedron~$T$. The definition makes use of a
set of Lagrange points on~$T$, which should have the usual layout.
Note again that the same construction also works for hexahedral elements and the ansatz space
$\Pi^\times_k$ of $\R^{3 \times 3}$-valued functions that are $k$th-order polynomials in each local
coordinate direction.

Remember that we define a unit normal $\bn$ for each face of the grid, and a basis of
unit vectors $\bt_E$, $\bn_{E,1}$, $\bn_{E,2}$ for each edge $E$ such that $\bt_E$ is tangent
to $E$, and $\bn_{E,1}$, $\bn_{E_2}$ span the orthogonal space of $E$.  The orientation of
each of these vectors is arbitrary but fixed for the entire grid.

\begin{definition}
\label{def:simplex_element}
 Let $T$ be a tetrahedron in $\R^3$.  The $k$th-order $H(\sym \Curl)$ finite element on $T$ is the space
 of all functions $U \in \Pi_k(T;\R^{3 \times 3})$, with the following degrees of freedom:
  \begin{enumerate}
  \item
   For each vertex $V$ of $T$:
   \begin{enumerate}
    \item \label{enum:vertex_offdiagonal_dof}
      The off-diagonal values of $U$ at $V$
      \begin{align*}
       U \mapsto U(V)_{ij}
       \qquad
       i,j=1,2,3, \quad i \neq j,
      \end{align*}
      ($4 \times 6$ degrees of freedom),

    \item \label{enum:vertex_diagonal_dof}
      the differences between the diagonal entries
      \begin{equation*}
       U \mapsto U(V)_{11} - U(V)_{22}
       \qquad \text{and} \qquad
       U \mapsto U(V)_{22} - U(V)_{33},
      \end{equation*}
      ($4 \times 2$ degrees of freedom),

    \item \label{enum:vertex_identity_dof}
     the trace of $U$ at $V$
     \begin{equation*}
      U \mapsto U(V)_{11} + U(V)_{22} + U(V)_{33}
     \end{equation*}
     (4 degrees of freedom).

   \end{enumerate}

  \item
   If $k \ge 2$: For each edge $E$ of $T$, and each inner Lagrange point $L$ on $E$:
   \begin{enumerate}
    \item \label{enum:edge_edge_dof}
     The quantities
     \begin{equation*}
      U \mapsto \bn_{E,1}^T U(L) \bt_E
      \qquad \text{and} \qquad
      U \mapsto \bn_{E,2}^T U(L) \bt_E,
     \end{equation*}
     ($6 \times (k-1) \times 2$ degrees of freedom),

    \item \label{enum:edge_face_dof}
     for each of the two adjacent faces $F_i$, $i=1,2$, the quantities
     \begin{align*}
      U & \mapsto \bt_E^T U(L) (\bn_i \times \bt_E)
      \qquad
      U \mapsto \bn_i^T U(L) (\bn_i \times \bt_E)
      \\
      U & \mapsto \bt_E^T U(L) (\bn_i \times \conormal_i) + \conormal_i^T U(L) (\bn_i \times \bt_E)
     \end{align*}
     where $\conormal_i$ is the conormal to $F_i$ at $L$, with orientation such
     that $\bt_E = \bn_i \times \conormal_i$.
     ($6 \times (k-1) \times 2 \times 3$ degrees of freedom),

    \item \label{enum:edge_identity_dof}
     the quantity
     \begin{equation*}
      U \mapsto \bt_E^T U(L) \bt_E
     \end{equation*}
      ($6 \times (k-1)$ degrees of freedom).

   \end{enumerate}

  \item
   If $k \ge 3$:
   For each face $F$ of $T$ with normal $\bn$ and two vectors $\ba_1$, $\ba_2$
   (not necessarily tangent, but such that $\ba_1$, $\ba_2$, $\bn$
   are linearly independent):
   \begin{enumerate}
    \item \label{enum:face_dof_normal}
   For each inner Lagrange point $L$ of $F$ the quantities
   \begin{alignat*}{6}
     U & \mapsto \ba_1^T U(L) (\bn \times \ba_1) & \qquad &
     U   \mapsto \ba_2^T U(L) (\bn \times \ba_2) \\
     U & \mapsto \ba_1^T U(L) (\bn \times \ba_2) & + \ba_2^T U(L) (\bn \times \ba_1)\\
     U & \mapsto \bn^T U(L) (\bn \times \ba_1) & &
     U   \mapsto \bn^T U(L) (\bn \times \ba_2) & &
   \end{alignat*}
   ($4 \times \binom{k-1}{2} \times 5$ degrees of freedom),

  \item \label{enum:face_dof_tangential}
   the quantities
   \begin{alignat*}{6}
    U & \mapsto \ba_1^T U (\bn \times \ba_2) - \ba_2^T U (\bn \times \ba_1) & \qquad &
    U \mapsto \ba_1^T U \bn \\
    U & \mapsto \ba_2^T U \bn & &
    U \mapsto \bn^T U \bn
   \end{alignat*}
   ($4 \times \binom{k-1}{2} \times 4$ degrees of freedom).
  \end{enumerate}

  \item \label{enum:inner_dof}
   If $k\ge 4$: For each inner Lagrange point $L$ of $T$: The entries of $U(L)$
   ($\binom{k-1}{3} \times 9$ degrees of freedom).
 \end{enumerate}
\end{definition}

Note that the list
contains $\binom{k+3}{3} \times 9$ degrees of freedom in total, which is precisely the
dimension of the polynomial space $\Pi_k(T; \R^{3 \times 3})$.  To prove unisolvence we therefore show
that $U \in \Pi_k(T; \R^{3 \times 3})$ is zero if all degrees of freedom are zero.

\begin{theorem}
\label{thm:global_unisolvence}
 Let $U \in \Pi_k(T;\R^{3 \times 3})$ be such that all degrees of freedom
 of Definition~\ref{def:simplex_element} are zero.
 Then $U \equiv 0$.
\end{theorem}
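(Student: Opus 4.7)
The plan is to prove $U \equiv 0$ by a standard ``dimension peeling'' argument, moving from vertices to edges to faces to the interior of $T$, invoking at each step the pointwise unisolvence lemmas already established.

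First, at each of the four vertices $V$, the nine vertex degrees of freedom \eqref{enum:vertex_offdiagonal_dof}, \eqref{enum:vertex_diagonal_dof}, \eqref{enum:vertex_identity_dof} are precisely the conditions of Lemma~\ref{lem:vertex_unisolvence}, so that lemma gives $U(V) = 0$.

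Next, fix an edge $E$ of $T$, and consider the univariate polynomial $U|_E \in \Pi_k(E;\R^{3\times 3})$. By the previous step it vanishes at the two endpoints, and at each of the $k-1$ interior Lagrange points $L$ on $E$ the nine scalar degrees of freedom from \eqref{enum:edge_edge_dof}, \eqref{enum:edge_face_dof}, \eqref{enum:edge_identity_dof} are exactly the hypotheses of Lemma~\ref{lem:edge_unisolvence} (two edge conditions, six face conditions for the two adjacent faces, and the one identity-controlling condition~\eqref{eq:identity_dof}), so $U(L) = 0$. Hence $U|_E$ vanishes at all $k+1$ Lagrange nodes of $\Pi_k$ on $E$, and therefore $U|_E \equiv 0$.

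Now fix a face $F$ of $T$ and look at $U|_F \in \Pi_k(F;\R^{3\times 3})$. By the edge step it vanishes identically on $\partial F$. At each interior Lagrange point $L$ of $F$, the five conditions from \eqref{enum:face_dof_normal} combined with the four conditions from \eqref{enum:face_dof_tangential} match precisely the nine conditions of Lemmas~\ref{lem:face_degree_conformity} and~\ref{lem:face_unisolvence} (using that $\ba_1,\ba_2,\bn$ form a basis by the choice in the definition), so $U(L)=0$. Standard Lagrange interpolation on the triangle $F$ (or, equivalently, writing $U|_F = \lambda_1\lambda_2\lambda_3 \tilde U$ with $\tilde U \in \Pi_{k-3}$ and observing that the $\binom{k-1}{2}$ interior nodes form a unisolvent set for $\Pi_{k-3}$) then yields $U|_F \equiv 0$.

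Finally, the inner degrees of freedom~\eqref{enum:inner_dof} give $U(L)=0$ at every interior Lagrange node of $T$. Combined with the vanishing on $\partial T$ just proved, $U$ vanishes at all Lagrange nodes of $\Pi_k$ on the tetrahedron, so $U \equiv 0$. The dimension count $\binom{k+3}{3}\times 9$ on both sides confirms there is no slack in the argument. No step is truly hard: the serious obstacle—showing that the localized conditions at a single geometric entity really force $U=0$ there—has already been overcome in Lemmas~\ref{lem:vertex_unisolvence}, \ref{lem:edge_unisolvence}, and~\ref{lem:face_unisolvence}; what remains here is essentially bookkeeping to check that the degrees of freedom listed in Definition~\ref{def:simplex_element} coincide, at each node, with the hypotheses of the appropriate lemma.
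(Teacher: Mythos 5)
Your proposal is correct and follows essentially the same route as the paper: show that $U$ vanishes at every Lagrange node by matching the nine degrees of freedom at each node with the hypotheses of Lemmas~\ref{lem:vertex_unisolvence}, \ref{lem:edge_unisolvence}, and \ref{lem:face_unisolvence} (and the definition itself for interior nodes), then conclude by standard Lagrange unisolvence. The ``peeling'' from vertices to edges to faces is harmless but not actually needed, since at each Lagrange point the nine localized conditions already force $U$ to vanish there independently of the other nodes.
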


\begin{proof}
 We simply show that $U$ is zero at all Lagrange points. This is shown in
 Lemmas~\ref{lem:face_unisolvence}, \ref{lem:edge_unisolvence},
 and~\ref{lem:vertex_unisolvence} for face, edge, and vertex degrees
 of freedom, respectively.  For interior degrees of freedom it follows
 directly from the definition.
\end{proof}

At the same time, the degrees of freedom allow to control the required conformity.

\begin{theorem}[Conformity]
\label{thm:global_conformity}
 Let $F$ be a face of $T$ with normal $\bn$.  Let $U \in \Pi_k(T;\R^{3 \times 3})$
 be such that the degrees of freedom of types~\ref{enum:vertex_offdiagonal_dof}),
 \ref{enum:vertex_diagonal_dof}),
 \ref{enum:edge_edge_dof}), \ref{enum:edge_face_dof}),
 and~\ref{enum:face_dof_normal}) corresponding to $F$ are zero.  Then
 \begin{equation*}
  \sym (U \Anti \bn) = 0
 \end{equation*}
 on $F$.
\end{theorem}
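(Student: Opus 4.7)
The plan is to reduce the global claim $\sym(U \Anti \bn) \equiv 0$ on $F$ to a collection of pointwise identities at the Lagrange nodes on $F$, and to close the argument by polynomial unisolvence on the triangle. Since $U \in \Pi_k(T;\R^{3\times 3})$, the matrix field $\sym(U \Anti \bn)$ restricts to $F$ as a symmetric matrix-valued polynomial of degree at most $k$. A standard $k$-th order Lagrange layout on a triangle is unisolvent for $\Pi_k$ on that triangle, so it suffices to verify $\sym(U(x)\Anti \bn) = 0$ at every Lagrange node $x$ on the closure of $F$.

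I would then organise the verification by the three geometric strata of $\overline{F}$: the three vertices of $F$, the inner Lagrange nodes on the three edges of $F$, and the inner Lagrange nodes of $F$. Each stratum already has a matching pointwise conformity lemma from the previous section whose hypotheses were engineered to coincide with the vanishing of the corresponding DOFs.

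At a vertex $V$ of $F$, the assumed vanishing of the DOFs of types \ref{enum:vertex_offdiagonal_dof} and \ref{enum:vertex_diagonal_dof} is precisely the system \eqref{eq:vertex_offdiagonal_conditions}, \eqref{eq:vertex_diagonal_conditions} applied to $U(V)$, so Lemma \ref{lem:vertex_conformity} gives $\sym(U(V)\Anti \bn)=0$ for every $\bn$, in particular for the face normal. At an inner edge Lagrange node $L$ on an edge $E$ of $F$, the DOFs of types \ref{enum:edge_edge_dof} and the three DOFs of type \ref{enum:edge_face_dof} belonging to the face $F$ are exactly the five hypotheses of Lemma \ref{lem:edge_conformity}, so $\sym(U(L)\Anti \bn)=0$. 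Finally, at an inner Lagrange node $L$ of $F$, the five DOFs of type \ref{enum:face_dof_normal} are verbatim the five conditions of Lemma \ref{lem:face_degree_conformity} with the vectors $\ba_1,\ba_2,\bn$ fixed for $F$ in Definition \ref{def:simplex_element}, so again $\sym(U(L)\Anti \bn)=0$.

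I do not anticipate any substantial obstacle. The catalogue of DOFs in Definition \ref{def:simplex_element} was built precisely so that each stratum of $F$ inherits the hypotheses of the matching local lemma, and the orientation conventions (face normal $\bn$, edge tangent $\bt_E$, conormal $\conormal_i$ with $\bt_E = \bn_i \times \conormal_i$) are fixed globally. The only sanity check worth spelling out is that the conclusion of Lemma \ref{lem:face_degree_conformity} is intrinsic to $U$ and $\bn$ (it does not depend on the auxiliary vectors $\ba_1,\ba_2$ used to write the five conditions), so nodes on different strata of $\overline{F}$ produce compatible information about the same polynomial $\sym(U\Anti \bn)|_F$, and unisolvence of the triangular Lagrange layout finishes the proof.
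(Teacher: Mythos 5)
Your proposal is correct and follows essentially the same route as the paper: restrict $\sym(U\Anti\bn)$ to $F$, note it is a degree-$k$ polynomial determined by its values at the Lagrange nodes of $F$, and verify vanishing at each node via Lemmas~\ref{lem:vertex_conformity}, \ref{lem:edge_conformity}, and \ref{lem:face_degree_conformity} according to whether the node is a vertex, an inner edge node, or an inner face node. The only difference is that you spell out the node-by-node matching of degrees of freedom to lemma hypotheses more explicitly than the paper does.
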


\begin{proof}
 The function $\sym (U \Anti \bn)$ is a polynomial of degree $k$.  Therefore
 its restriction to~$F$ is
 determined uniquely by its values on the $\binom{k+2}{2}$
 Lagrange nodes on $F$.  The assertion holds for the face boundary Lagrange points
 by Lemmas~\ref{lem:edge_conformity} and~\ref{lem:vertex_conformity}, and for the inner points
 of $F$ by Lemma~\ref{lem:face_degree_conformity}.
\end{proof}

From their description in Definition~\ref{def:simplex_element}, the degrees
of freedom have natural associations to faces, edges, etc.\ of the tetrahedron $T$.
However, in the definition of the global finite element space we only identify
those degrees of freedom that control conformity:

\begin{definition}
\label{def:finite_element_space}
 Let $\mathcal{T}$ be a conforming grid of $\Omega$.
 The finite element space $V_{h,k}^\textup{sym\,Curl}$ is the set
 \begin{equation*}
  V_{h,k}^\textup{sym\,Curl}
  \colonequals
  \Big\{ U_h \in L^2(\Omega; \R^{3\times 3})
    \; : \;
    U_h \in \Pi_k(T;\R^{3\times 3}) \quad \forall T \in \mathcal{T}
  \Big\},
 \end{equation*}
 with the restriction that
 \begin{itemize}
  \item vertex degrees of freedom of types~\ref{enum:vertex_offdiagonal_dof})
       and~\ref{enum:vertex_diagonal_dof}) coincide for elements that share the vertex,
  \item edge degrees of freedom of types~\ref{enum:edge_edge_dof})
    and~\ref{enum:edge_face_dof}) coincide for elements that share the edge,
  \item face degrees of freedom of type~\ref{enum:face_dof_normal})
    coincide for elements that share the face.
 \end{itemize}
\end{definition}

Combining Theorem~\ref{thm:global_conformity} with the characterization result
of Theorem~\ref{thm:jump_conditions}, we directly get the following conformity relation.

\begin{corollary}
  $V_{h,k}^\textup{sym\,Curl} \subset H(\sym \Curl) \subset H(\dev \sym \Curl)$.
\end{corollary}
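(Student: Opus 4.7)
The plan is to establish the two inclusions separately. The second inclusion $H(\sym \Curl) \subset H(\dev \sym \Curl)$ was already recorded in the introduction and follows immediately from the fact that $\dev$ is a bounded linear map on $\R^{3 \times 3}$, so $\sym \Curl U \in L^2$ forces $\dev \sym \Curl U \in L^2$. The substantive part is therefore the first inclusion, $V_{h,k}^\textup{sym\,Curl} \subset H(\sym \Curl)$. For this I would invoke the jump-condition characterization of Theorem~\ref{thm:jump_conditions}: it suffices to check that for every $U_h \in V_{h,k}^\textup{sym\,Curl}$ and every inner face $F$ with normal $\bn$, the jump $[U_h] = U_h|_{T_+} - U_h|_{T_-}$ across the two adjacent tetrahedra satisfies $\sym([U_h] \Anti \bn) = 0$ on $F$.

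The key step reuses the structure of the proof of Theorem~\ref{thm:global_conformity}, but applied to the jump rather than to a polynomial on a single tetrahedron. Since $[U_h]|_F$ is a polynomial of degree $k$ on $F$, the matrix-valued polynomial $\sym([U_h] \Anti \bn)|_F$ is also of degree $k$, and therefore vanishes on $F$ as soon as it vanishes at all $\binom{k+2}{2}$ Lagrange nodes of $F$. I would verify the latter node by node. Definition~\ref{def:finite_element_space} identifies, across the two elements sharing $F$, exactly the degrees of freedom needed by Lemmas~\ref{lem:vertex_conformity}, \ref{lem:edge_conformity}, and~\ref{lem:face_degree_conformity}: types \ref{enum:vertex_offdiagonal_dof}) and~\ref{enum:vertex_diagonal_dof}) at each vertex of $F$, types~\ref{enum:edge_edge_dof}) and~\ref{enum:edge_face_dof}) at each inner edge Lagrange point on $F$, and type~\ref{enum:face_dof_normal}) at each inner face Lagrange point of $F$. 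Consequently the jump $[U_h]$ has all of these DOFs equal to zero pointwise, and the three lemmas respectively yield $\sym([U_h](L) \Anti \bn) = 0$ at every such Lagrange node $L$ on $F$.

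I do not expect a serious obstacle; the main thing to watch is the bookkeeping check that the DOFs identified in Definition~\ref{def:finite_element_space} match precisely the hypotheses of the three conformity lemmas at vertices, edge nodes, and face-interior nodes of $F$. In particular, the identity-controlling degrees of freedom of types~\ref{enum:vertex_identity_dof}), \ref{enum:edge_identity_dof}), and~\ref{enum:face_dof_tangential}) are deliberately \emph{not} identified across elements, so the jump at a vertex may well be a nonzero multiple of the identity; this is harmless, because Lemma~\ref{lem:vertex_conformity} already gives $\sym(\alpha I \cdot \Anti \bn) = 0$, and these DOFs are needed only for unisolvence inside the element. Combining the pointwise vanishing at all face Lagrange nodes with the polynomial degree bound yields $\sym([U_h] \Anti \bn) = 0$ on $F$, and Theorem~\ref{thm:jump_conditions} concludes $U_h \in H(\sym \Curl)$.
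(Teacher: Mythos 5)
Your argument is correct and is essentially the paper's own: the paper proves the corollary in one line by combining Theorem~\ref{thm:global_conformity} with the jump characterization of Theorem~\ref{thm:jump_conditions}, which is exactly the reduction you carry out (applying the Lagrange-node argument to the jump $[U_h]$ rather than to $U_h$ on a single element). Your additional remarks — that the non-identified identity-type degrees of freedom are harmless because $\sym(\alpha I\,\Anti\bn)=0$, and that $\dev$ being linear and bounded gives the second inclusion — are accurate elaborations of what the paper leaves implicit.
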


Observe that Definition~\ref{def:finite_element_space} does \emph{not} require type~\ref{enum:face_dof_tangential})
degrees of freedom or degrees of freedom related to identity matrices
(types~\ref{enum:vertex_identity_dof}) and~\ref{enum:edge_identity_dof})) to match
for adjacent elements, even though they are presented as belonging to the
element boundary in Definition~\ref{def:simplex_element}.
These are the degrees of freedom that allow to violate $H(\Curl)$-conformity.

\begin{theorem}
 For any $k \ge 1$ and any connected grid $\mathcal{T}$ with more than one element,
 there is a function $U_h \in V_{h,k}^\textup{sym\,Curl}$ such that $U_h \notin H(\Curl)$.
\end{theorem}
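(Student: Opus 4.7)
The plan is to exploit the identity-type degrees of freedom (type~\ref{enum:vertex_identity_dof}) at vertices, and for $k\ge 2$ also type~\ref{enum:edge_identity_dof}) at edges) that are \emph{not} required to match across element boundaries in Definition~\ref{def:finite_element_space}. This freedom permits constructing $U_h$ whose jump across some interior face is a nonzero scalar multiple of the identity, and such a jump is exactly what separates $H(\sym\Curl)$- from $H(\Curl)$-conformity: if $[U_h]=cI$, then $\sym([U_h]\Anti\bn)=c\,\sym(\Anti\bn)=0$ because $\Anti\bn$ is antisymmetric, while $[U_h]\Anti\bn=c\,\Anti\bn\neq 0$ for $c\neq 0$ and $\bn\neq 0$.

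Because $\mathcal{T}$ is connected with more than one tetrahedron, it has at least one interior face $F$ shared by two elements $T_1$ and $T_2$; pick any vertex $V$ of $F$. Using Theorem~\ref{thm:global_unisolvence}, I would define $U_h|_{T_1}\in\Pi_k(T_1;\R^{3\times 3})$ to be the unique polynomial for which the vertex trace DOF of type~\ref{enum:vertex_identity_dof}) at $V$ equals $1$ while every other DOF from Definition~\ref{def:simplex_element} vanishes, and set $U_h\equiv 0$ on every other element of $\mathcal{T}$. A short direct computation (the nine vertex DOFs at $V$ pin $U_h|_{T_1}(V)=\tfrac{1}{3}I$, and all other Lagrange-node values are forced to zero through Lemmas~\ref{lem:face_unisolvence} and~\ref{lem:edge_unisolvence}) identifies $U_h|_{T_1}=\tfrac{1}{3}\,\phi_V\, I$, where $\phi_V$ is the scalar $k$-th order Lagrange basis function for $V$ on $T_1$.

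To confirm $U_h\in V_{h,k}^\textup{sym\,Curl}$, I would verify the matching conditions of Definition~\ref{def:finite_element_space}: the only shared DOFs are of types~\ref{enum:vertex_offdiagonal_dof}),~\ref{enum:vertex_diagonal_dof}),~\ref{enum:edge_edge_dof}),~\ref{enum:edge_face_dof}), and~\ref{enum:face_dof_normal}), and each is zero on $T_1$ by construction (the off-diagonals and diagonal differences of $\tfrac{1}{3}I$ vanish) and zero on every other element because $U_h$ vanishes there. The matching is therefore trivial, even when $V$ is shared by more than the two elements $T_1$ and $T_2$.

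For the nonconformity, $[U_h]|_F=\tfrac{1}{3}\phi_V\,I$ along $F$, so $[U_h]\Anti\bn=\tfrac{1}{3}\phi_V\,\Anti\bn$, which is nonzero throughout a full open neighborhood of $V$ in $F$. Invoking the standard jump characterization of $H(\Curl)$ (the exact analogue of Theorem~\ref{thm:jump_conditions} with $\sym$ removed, obtained from the same integration-by-parts argument) then yields $U_h\notin H(\Curl)$. The main subtlety I anticipate is the matching verification at a vertex $V$ shared by many elements; but as noted above, the conformity-enforcing vertex DOFs detect only the non-identity part of $U(V)$, so the issue dissolves. The construction is uniform in $k\ge 1$: higher-order cases simply add further edge, face, and interior DOFs, all of which are set to zero on every element.
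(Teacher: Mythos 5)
Your proof is correct and follows essentially the same route as the paper: a function supported on a single element whose only nonzero degrees of freedom are the non-shared identity-type ones, so that the jump across an interior face is a multiple of the identity --- which kills $\sym([U_h]\Anti\bn)$ but not $[U_h]\Anti\bn$. The paper simply takes $U_h$ equal to the identity matrix on the whole element (and zero elsewhere), which avoids the Lagrange-basis computation; your witness $\tfrac{1}{3}\phi_V\,I$ works just as well.
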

\begin{proof}
Let $U_h : \Omega \to \R^{3 \times 3}$ be a function that is zero everywhere
except on one element $T$ of $\mathcal{T}$, where $U_h$ is the identity matrix.
This is a finite element function in the sense of Definition~\ref{def:finite_element_space}.
Indeed, on $T$, the only nonzero degrees of freedom are
of types~\ref{enum:vertex_identity_dof}), \ref{enum:edge_identity_dof}), \ref{enum:face_dof_tangential}),
and~\ref{enum:inner_dof}),
and hence the coupling restrictions of Definition~\ref{def:finite_element_space}
are fulfilled.
The function $U_h$ is not in $H(\Curl)$, because on any face of $T$
with normal $\bn$ we have
\begin{equation*}
 [U_h] \Anti \bn = \Anti \bn \neq 0.
 \qedhere
\end{equation*}
\end{proof}

Interpolation error bounds for the space $V_{h,k}^\text{sym\,Curl}$ are standard,
because the polynomial space is invariant under affine transformations, and the degrees of freedom
are essentially point evaluations.  Optimal bounds for the interpolation
error therefore follow from the standard arguments~\cite{bartels:2016}.

Let $\mathcal{T}$ be a tetrahedral grid, and let $W_{\mathcal{T},m,p}$ be the space
of all functions $U \in H(\sym \Curl)$ such that for each $T \in \mathcal{T}$,
the restriction $U|_T$ is in $W^{m,p}(T;\R^{3 \times 3})$.
For these functions, if $m \ge 2$ and $p \ge 2$, the degrees of freedom
of Definition~\ref{def:simplex_element} imply a well-defined interpolation operator.
\begin{theorem}
 Let $I_h : W_{\mathcal{T},m,p} \to V_{h,k}^\textup{sym\,Curl}$ be the interpolation
 operator associated to the degrees of freedom of Definition~\ref{def:simplex_element}.
 For any $U \in W_{\mathcal{T},m,p}$ and $0 \le k \le m$, there is a constant $c>0$
 independent from the grid resolution and quality, such that
 \begin{equation*}
  \abs{U - I_h U}_{W^{k,p}(\Omega}
  \le
  c \max_{T \in \mathcal{T}} h_T^m \rho_T^{-k} \abs{U}_{W^{m,p}(\Omega}),
 \end{equation*}
 where $h_T$ is the diameter of $T$, and $\rho_T$ is its incircle radius.
\end{theorem}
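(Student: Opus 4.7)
The plan is to invoke the standard Bramble--Hilbert interpolation argument on a reference element combined with affine scaling, exploiting the fact that, despite their geometric flavour, every degree of freedom in Definition~\ref{def:simplex_element} is a point evaluation of a linear combination of the entries of $U$ whose coefficients are polynomials of low order in the components of unit-length vectors.

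First I would settle well-definedness of $I_h$. In $\R^3$, the hypothesis $m \ge 2$, $p \ge 2$ gives $mp \ge 4 > 3$, so the Sobolev embedding yields $W^{m,p}(T;\R^{3\times 3}) \hookrightarrow C^0(T;\R^{3\times 3})$ on every element. Hence each DOF is a bounded linear functional on $W^{m,p}(T;\R^{3\times 3})$, and by Theorem~\ref{thm:global_unisolvence} the DOF values determine a unique local interpolant $I_T U \in \Pi_k(T;\R^{3\times 3})$. Theorems~\ref{thm:global_conformity} and~\ref{thm:jump_conditions} guarantee that the assembled function $I_h U$ lies in $V_{h,k}^{\textup{sym\,Curl}}$.

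Next I would carry out the classical affine scaling. Let $F_T(\hat x) = B_T \hat x + b_T$ be the affine map from a fixed reference tetrahedron $\hat T$ onto $T$, and set $\hat U \colonequals U \circ F_T$. Pulling $I_T$ back to $\hat T$ produces a projection $\hat I_T$ onto $\Pi_k(\hat T;\R^{3\times 3})$ whose DOFs at each reference Lagrange node are obtained by substituting the physical vectors $\bt_E$, $\bn$, $\conormal$. Assuming these pulled-back functionals are bounded on $C^0(\hat T;\R^{3\times 3})$ uniformly in $T$ (see below), the Bramble--Hilbert lemma yields
\[
 |\hat U - \hat I_T \hat U|_{W^{k,p}(\hat T)}
 \le c\, |\hat U|_{W^{m,p}(\hat T)}
\]
with $c$ independent of $T$. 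The standard bounds $\|B_T\| \le c\, h_T$ and $\|B_T^{-1}\| \le c\, \rho_T^{-1}$, together with the usual change-of-variables formulas for Sobolev seminorms, then translate this into the elementwise estimate
\[
 |U - I_T U|_{W^{k,p}(T)}
 \le c\, h_T^m \rho_T^{-k}\, |U|_{W^{m,p}(T)}.
\]
Raising to the $p$-th power, summing over $T \in \mathcal T$ and taking the $p$-th root produces the global bound.

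The single point that needs attention, and therefore the main obstacle, is the uniform boundedness of the functionals defining $\hat I_T$. Because $\hat I_T$ depends on the physical element through its edge tangents, face normals and conormals, the Bramble--Hilbert constant could a priori deteriorate as these vectors vary. The argument goes through because every DOF in Definition~\ref{def:simplex_element} is a multilinear form of low order in at most three unit-length vectors and the matrix $U$, so the DOF norms admit an absolute upper bound. This yields a single Bramble--Hilbert constant valid for every element in $\mathcal T$ and recovers the shape-regularity-independent scaling $h_T^m \rho_T^{-k}$ familiar from, e.g., \cite{bartels:2016}.
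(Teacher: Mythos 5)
Your overall strategy---Sobolev embedding for well-definedness of the degrees of freedom, pull-back to a reference tetrahedron, Bramble--Hilbert, and the scaling bounds $\|B_T\|\le c\,h_T$, $\|B_T^{-1}\|\le c\,\rho_T^{-1}$---is exactly the ``standard argument'' that the paper invokes; the paper itself offers no proof beyond a pointer to the literature, so your outline is the intended skeleton. You also correctly isolate the only non-routine point: the pulled-back degrees of freedom are not those of a fixed reference element, because they carry the physical vectors $\bn$, $\bt_E$, $\conormal$ with them.

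Your resolution of that point, however, has a gap. An absolute upper bound on the norms of the functionals $\hat\ell_i$ does not bound the operator norm of $\hat I_T=\sum_i\hat\ell_i(\cdot)\,\hat\phi_i$: one also needs the dual basis $\hat\phi_i$ to be uniformly bounded, which amounts to a uniform \emph{lower} bound on the determinant of the generalized Vandermonde matrix $\big(\hat\ell_i(\hat p_j)\big)$, i.e., on the matrices that are inverted in Lemmas~\ref{lem:face_unisolvence} and~\ref{lem:edge_unisolvence}. For the face degrees of freedom this can be arranged by choosing $\ba_1,\ba_2$ orthonormal to $\bn$, but for the edge degrees of freedom the matrix with columns $\bn_1\times\bt_E$, $\bn_2\times\bt_E$, $\bt_E$ has determinant proportional to the sine of the dihedral angle of $T$ at $E$, which tends to zero for sliver elements, so the corresponding dual basis blows up. ``Multilinear in unit-length vectors'' therefore gives you boundedness of the data, not uniformity of the Bramble--Hilbert constant: you either need a shape-regularity hypothesis (dihedral angles bounded away from $0$ and $\pi$), under which a continuity-plus-compactness argument yields a uniform constant, or you must track the degeneration explicitly and show it is absorbed by the factor $\rho_T^{-k}$---which does not happen automatically, since the reference-element constant is independent of the affine scaling. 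As written, the claim of a constant independent of grid quality is not established. (A further minor point: the Bramble--Hilbert step requires $\hat I_T$ to reproduce $\Pi_{m-1}$, hence $m\le k+1$; for larger $m$ the stated rate $h_T^m$ cannot hold, though this imprecision is already present in the theorem statement itself.)
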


\section{A larger space on hexahedral grids with few normals}
\label{sec:hexahedral_elements}

The tight conformity conditions at the vertices can be loosened a bit if all faces
meeting at a vertex are normal to one of three vectors $\bn_{V,1}$, $\bn_{V,2}$, $\bn_{V,3}$
associated to that vertex.
In that case, these three vectors can be used to define global vertex
degrees of freedom that do not couple (almost) the entire matrix
as in Chapter~\ref{sec:vertex_dofs}.

Grids with the required property necessarily consist of hexahedral elements only.
By far the most important example are axis-aligned grids, but there are more,
in particular if curvilinear hexahedra are used.

\subsection{Vertex degrees of freedom revisited}

Let again $V$ be a vertex of an element.  We now assume that the vertex $V$
comes equipped with a basis of unit vectors $\bn_{V,1}$, $\bn_{V,2}$, $\bn_{V,3}$, and that
for any element $H$ adjacent to $V$, the three faces $F_1$, $F_2$, $F_3$ of $H$ at $V$ are
orthogonal to $\bn_{V,1}$, $\bn_{V,2}$, $\bn_{V,3}$, respectively.
Under these new circumstances we retry the failed attempt of Chapter~\ref{sec:vertex_dofs},
and formulate the conformity conditions
of Lemma~\ref{lem:face_degree_conformity} for each of the three faces
in terms of $\bn_{V,1}$, $\bn_{V,2}$, and $\bn_{V,3}$.  Omitting the subindex $V$
in the following we obtain nine degrees of freedom in total,
and a natural assignment to the edges and faces at the vertex~$V$:

\begin{center}
 \begin{tikzpicture}
  \fill [gray!20] (-2, 1) -- (0,0) -- (2,1) -- cycle;
  \fill [gray!30] (-2, 1) -- (0,0) -- (0,-1.6) -- cycle;
  \fill [gray!10] ( 2, 1) -- (0,0) -- (0,-1.6) -- cycle;

  \draw (-2, 1) -- (0,0);
  \draw ( 2, 1) -- (0,0);
  \draw ( 0, -1.6) -- (0,0);

  \node at (0,0.6) {$F_1$};
  \node at ( 0,  2.8) {$\bn_2^T U (\bn_1 \times \bn_3) + \bn_3^T U (\bn_1 \times \bn_2) = 0$};

  \node at (-0.7,-0.3) {$F_2$};
  \node at (-4.3, -0.8) {$\bn_3^T U (\bn_2 \times \bn_1) + \bn_1^T U (\bn_2 \times \bn_3) = 0$};

  \node at (0.7,-0.3) {$F_3$};
  \node at ( 4.3, -0.8) {$\bn_1^T U (\bn_3 \times \bn_2) + \bn_2^T U (\bn_3 \times \bn_1) = 0$};

  \node at (-2.8, 1.9) {$\bn_{E_3,1}^T U \bt_{E_3} = 0$};
  \node at (-2.8, 1.3) {$\bn_{E_3,2}^T U \bt_{E_3} = 0$};

  \node at ( 2.8, 1.9) {$\bn_{E_2,1}^T U \bt_{E_2} = 0$};
  \node at ( 2.8, 1.3) {$\bn_{E_2,2}^T U \bt_{E_2} = 0$};

  \node at (   0, -2)   {$\bn_{E_1,1}^T U \bt_{E_1} = 0$};
  \node at (   0, -2.6) {$\bn_{E_1,2}^T U \bt_{E_1} = 0$};

 \end{tikzpicture}
\end{center}
In contrast to the illustration in Chapter~\ref{sec:vertex_dofs},
we have here formulated the edge conditions in terms of the edge
geometries.  For $i=1,2,3$, the vector $\bt_{E_i}$ denotes the unit tangent
vector of the edge $E_i$ opposite of $F_i$, and $\bn_{E_i,1}$, $\bn_{E_i,2}$
are the given vectors that span the orthogonal plane of $E_i$.
That way, the edge quantities can be used as global edge degrees of freedom.

The special geometric situation comes into play for the three
face conditions.  Remember that they are not independent.
However, the fact that the normals $\bn_1$, $\bn_2$, $\bn_3$ are now the same
for all elements adjacent to $V$ allows to turn the face conditions into
vertex degrees of freedom.
For this, we pick any two of the three face conditions, e.g.,
\begin{align}
\label{eq:joint_vertex_conditions_a}
 \bn_2^T U (\bn_1 \times \bn_3) + \bn_3^T U (\bn_1 \times \bn_2) & = 0 \\
 \intertext{and}
\label{eq:joint_vertex_conditions_b}
 \bn_3^T U (\bn_2 \times \bn_1) + \bn_1^T U (\bn_2 \times \bn_3) & = 0,
\end{align}
and we associate them to $V$.
This means that these quantities will be the same for all elements at $V$,
which is possible because by assumption all elements that meet at $V$ share the same
three normal vectors.

The following conformity result then
again follows from Lemma~\ref{lem:face_degree_conformity}:
\begin{lemma}[Conformity]
 Let $U \in \R^{3 \times 3}$, and let $F_i$, $i =1,2,3$, be a face at a vertex $V$,
 with normal $\bn_i$. If the four edge conditions hold for the two edges adjacent to $F_i$ and $V$,
 and if additionally the conditions~\eqref{eq:joint_vertex_conditions_a}
 and~\eqref{eq:joint_vertex_conditions_b} hold, then $\sym (U \Anti \bn_i) = 0$.
\end{lemma}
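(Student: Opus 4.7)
The plan is to reduce the claim to Lemma~\ref{lem:face_degree_conformity} applied to the face $F_i$ with the triple $\ba_1 = \bn_j$, $\ba_2 = \bn_k$, $\bn = \bn_i$, where $\{i,j,k\} = \{1,2,3\}$. Since $\bn_1,\bn_2,\bn_3$ are assumed to form a basis of $\R^3$, so is this triple, and Lemma~\ref{lem:face_degree_conformity} becomes applicable. Once its five conditions are verified under the present hypotheses, the conclusion $\sym(U\Anti\bn_i) = 0$ follows immediately.

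The third of those five conditions reads $\bn_j^T U (\bn_i \times \bn_k) + \bn_k^T U (\bn_i \times \bn_j) = 0$ and coincides (using $\bn_a \times \bn_b = -\bn_b \times \bn_a$) with \eqref{eq:joint_vertex_conditions_a} when $i=1$, with \eqref{eq:joint_vertex_conditions_b} when $i=2$, and with the dependent third face condition when $i=3$; the latter is implied by the first two, as already noted in the text. So this particular condition is always among the hypotheses.

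For the remaining four conditions of Lemma~\ref{lem:face_degree_conformity}, the key geometric observation I would use is that $\bn_i \times \bn_j$ is collinear with $\bt_{E_k}$, and $\bn_i \times \bn_k$ with $\bt_{E_j}$. This is simply because the edge $E_\ell$ opposite $F_\ell$ lies in the two other faces at $V$, so its tangent is orthogonal to both of their normals and hence parallel to their cross product. Next, the four hypothesized edge conditions for $E_j$ and $E_k$, combined with the fact that $\bn_{E,1}$ and $\bn_{E,2}$ span $\bt_E^\perp$, force $U\bt_{E_j}$ and $U\bt_{E_k}$ to be scalar multiples of $\bt_{E_j}$ and $\bt_{E_k}$, respectively. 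Substituting these identities into conditions (i), (ii), (iv), (v) reduces each of them to a scalar multiple of an inner product of the form $\bn_m^T \bt_{E_\ell}$ with $E_\ell \subset F_m$, and each such product vanishes by the incidence structure at $V$.

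I do not foresee a real obstacle; the only substantive step is setting up the dictionary $\bn_j \times \bn_k \propto \bt_{E_\ell}$ that links the vertex-style quantities appearing in Lemma~\ref{lem:face_degree_conformity} with the edge-style quantities that the hypotheses actually constrain. After that, the verification is a routine substitution followed by one application of Lemma~\ref{lem:face_degree_conformity}.
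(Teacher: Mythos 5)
Your proposal is correct and follows essentially the same route as the paper: identify $\bt_{E_k}$ with $\bn_i\times\bn_j$ up to scaling, use the edge conditions to conclude $U\bt_{E}$ is collinear with $\bt_E$ so that four of the five hypotheses of Lemma~\ref{lem:face_degree_conformity} vanish by orthogonality, and supply the fifth from~\eqref{eq:joint_vertex_conditions_a}, \eqref{eq:joint_vertex_conditions_b}, or (for $F_3$) their sum. The only difference is that you treat all three faces uniformly with an index convention, whereas the paper writes out $F_1$ and remarks that the other two cases are analogous.
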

\begin{proof}
 For simplicity we show the assertion for $F_1$ only.  The four edge conditions
 \begin{alignat*}{3}
  \bn_{E_2,1}^T U \bt_{E_2} & = 0  & \qquad \qquad & \bn_{E_2,2}^T U \bt_{E_2} & = 0 \\
  \bn_{E_3,1}^T U \bt_{E_3} & = 0  &               & \bn_{E_3,2}^T U \bt_{E_3} & = 0
 \end{alignat*}
 signify that $U \bt_{E_2}$ is collinear to to $\bt_{E_2}$, and that $U\bt_{E_3}$
 is collinear to $\bt_{E_3}$.  As $\bn_1$ and $\bn_3$ are both normal to $\bt_{E_2}$
 (without being collinear to each other),  we get the equivalent conditions
 \begin{alignat}{3}
 \label{eq:vanilla_edge_conditions_hexa_a}
  \bn_1^T U (\bn_1 \times \bn_3) & = 0  & \qquad \qquad & \bn_3^T U (\bn_1 \times \bn_3) & = 0, \\
  \intertext{and likewise}
 \label{eq:vanilla_edge_conditions_hexa_b}
  \bn_1^T U (\bn_1 \times \bn_2) & = 0  &               & \bn_2^T U (\bn_1 \times \bn_2) & = 0.
 \end{alignat}
 Together with~\eqref{eq:joint_vertex_conditions_a}, $\sym (U \Anti \bn_1) = 0$ then
 follows from Lemma~\ref{lem:face_degree_conformity}.

 The proof for face $F_2$ is identical.  For $F_3$, the required fifth condition is
 the sum of~\eqref{eq:joint_vertex_conditions_a} and~\eqref{eq:joint_vertex_conditions_b}.
\end{proof}

To complement the eight conditions to a unisolvent set
we need to additionally control the identity matrix. One suitable condition for this is
\begin{equation}
\label{eq:vertex_identity_condition_hexa}
 \bn_1^T U (\bn_2 \times \bn_3) + \bn_2^T U (\bn_1 \times \bn_3) + \bn_3^T U (\bn_1 \times \bn_2) = 0.
\end{equation}
This condition will later be assigned to the grid element itself, and not
shared across vertices.

\begin{lemma}[Unisolvence]
 Let $U \in \R^{3 \times 3}$ be such that the six edge conditions of a vertex~$V$,
 as well as conditions~\eqref{eq:joint_vertex_conditions_a}, \eqref{eq:joint_vertex_conditions_b},
 and~\eqref{eq:vertex_identity_condition_hexa} hold.  Then $U = 0$.
\end{lemma}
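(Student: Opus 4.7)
The plan is to interpret the six edge conditions as the statement that $\bt_{E_1}, \bt_{E_2}, \bt_{E_3}$ are eigenvectors of $U$, then use the two face conditions~\eqref{eq:joint_vertex_conditions_a}--\eqref{eq:joint_vertex_conditions_b} to equate the three eigenvalues, and finally use~\eqref{eq:vertex_identity_condition_hexa} to pin the common value to zero. To this end, introduce $d \colonequals \det(\bn_1, \bn_2, \bn_3) \neq 0$ and the three cofactor vectors $\mathbf{v}_1 \colonequals \bn_2 \times \bn_3$, $\mathbf{v}_2 \colonequals \bn_1 \times \bn_3$, $\mathbf{v}_3 \colonequals \bn_1 \times \bn_2$.

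First I would rewrite the edge conditions. By exactly the argument already used in the preceding conformity proof, each pair $\bn_{E_i,1}^T U \bt_{E_i} = \bn_{E_i,2}^T U \bt_{E_i} = 0$ forces $U \bt_{E_i}$ to be collinear with $\bt_{E_i}$. The hexahedral assumption makes $\bt_{E_i}$ simultaneously orthogonal to $\bn_j$ and $\bn_k$ for $\{i,j,k\} = \{1,2,3\}$, hence $\bt_{E_i}$ is parallel to $\mathbf{v}_i$, and after rescaling one obtains $U \mathbf{v}_i = \nu_i \mathbf{v}_i$ for some scalars $\nu_1, \nu_2, \nu_3 \in \R$.

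Next I would substitute these relations into the remaining three conditions. The triple product identity $\bn_a^T (\bn_b \times \bn_c) = \det(\bn_a, \bn_b, \bn_c)$ together with the evaluation $\bn_i^T \mathbf{v}_j = 0$ for $i \neq j$ and $\bn_i^T \mathbf{v}_i = \pm d$ makes~\eqref{eq:joint_vertex_conditions_a} collapse to $d(\nu_3 - \nu_2) = 0$ and~\eqref{eq:joint_vertex_conditions_b} to $d(\nu_1 - \nu_3) = 0$, so $\nu_1 = \nu_2 = \nu_3 =: \nu$. Plugging this common value into~\eqref{eq:vertex_identity_condition_hexa} yields $d \nu = 0$, hence $\nu = 0$, and therefore $U \mathbf{v}_i = 0$ for $i = 1, 2, 3$.

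Finally, $\mathbf{v}_1, \mathbf{v}_2, \mathbf{v}_3$ are linearly independent: with $A$ and $B$ the matrices whose columns are $\bn_i$ and $\mathbf{v}_i$ respectively, the product $A^T B$ is diagonal with entries $d, -d, d$, hence invertible, so $B$ is invertible. Thus $U$ vanishes on a basis of $\R^3$, and $U = 0$ follows. The only delicate item is the sign bookkeeping in the triple products; conceptually the proof is entirely carried by the eigenvector reformulation of the edge conditions in the hexahedral setting, so I do not expect any real obstacle beyond that careful accounting.
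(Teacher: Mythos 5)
Your proof is correct; I checked the sign bookkeeping ($\bn_2^T(\bn_1\times\bn_3)=-d$, $\bn_1^T(\bn_2\times\bn_3)=\bn_3^T(\bn_1\times\bn_2)=d$), and the three conditions do collapse to $d(\nu_3-\nu_2)=0$, $d(\nu_1-\nu_3)=0$ and $d(\nu_1-\nu_2+\nu_3)=d\nu=0$ as you claim. The route is organized differently from the paper's. The paper works entirely with the nine scalar conditions: it first eliminates among \eqref{eq:joint_vertex_conditions_a}, \eqref{eq:joint_vertex_conditions_b} and \eqref{eq:vertex_identity_condition_hexa} to isolate $\bn_1^T U(\bn_2\times\bn_3)=\bn_2^T U(\bn_3\times\bn_1)=\bn_3^T U(\bn_1\times\bn_2)=0$, then stacks these together with the six rewritten edge conditions into $A^T U B = 0$ with $A=(\bn_1\,|\,\bn_2\,|\,\bn_3)$ and $B=(\bn_1\times\bn_2\,|\,\bn_3\times\bn_1\,|\,\bn_2\times\bn_3)$, and inverts both matrices. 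You instead spend the edge conditions first, converting them into the structural statement $U\mathbf{v}_i=\nu_i\mathbf{v}_i$, after which the three remaining conditions become a nonsingular linear system for $(\nu_1,\nu_2,\nu_3)$. Both arguments ultimately show that $U$ annihilates the basis $\{\bn_j\times\bn_k\}$; yours has the small advantage of making visible that the joint kernel of the eight conformity conditions is exactly the one-dimensional eigenspace structure $\nu_1=\nu_2=\nu_3$ (i.e., the multiples of the identity), which is the motivating observation the paper states only in prose, while the paper's version avoids introducing eigenvalues and needs no rescaling of the edge tangents.
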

\begin{proof}
The argument is similar to Lemma~\ref{lem:edge_unisolvence}:
Subtracting~\eqref{eq:joint_vertex_conditions_a} from~\eqref{eq:vertex_identity_condition_hexa}
leads to $\bn_1^T U (\bn_2 \times \bn_3) = 0$, and inserting this into~\eqref{eq:joint_vertex_conditions_b}
implies $\bn_3^T U (\bn_1 \times \bn_2) = 0$. Inserting this back into~\eqref{eq:joint_vertex_conditions_a}
yields $\bn_2^T U (\bn_3 \times \bn_1) = 0$.
Rewriting the edge conditions as in~\eqref{eq:vanilla_edge_conditions_hexa_a} or~\eqref{eq:vanilla_edge_conditions_hexa_b},
the resulting set of nine conditions can be
written as $A^T U B = 0$, where $A$ is the matrix with columns $\bn_1$, $\bn_2$, and $\bn_3$,
and $B$ is the matrix with columns $\bn_1 \times \bn_2$, $\bn_3 \times \bn_1$,
and $\bn_2 \times \bn_3$.  As both $A$ and $B$ are invertible, the assertion follows.
\end{proof}

\subsection{A hexahedral element with discontinuous vertices}

We can use the new set of vertex conditions to construct a larger finite element space
for hexahedral grids where all faces at a vertex $V$ have one of three normals
$\bn_{V,1}$, $\bn_{V,2}$, $\bn_{V,3}$.
Let $\Pi_k^\times(T; \R^{3 \times 3})$ be the space of $\R^{3 \times 3}$-valued polynomials
on a tetrahedron~$H$ that are of order $k$ or less in each of the three local
coordinate directions of $H$.

\begin{definition}
 Let $H$ be a hexahedron in $\R^3$.
 The $k$th-order $H(\sym \Curl)$ finite element on $H$ is the space
 of all functions $U \in \Pi_k^\times(H;\R^{3 \times 3})$, with the following degrees of freedom:
  \begin{enumerate}
  \item
   For each vertex $V$ of $H$:
   \begin{enumerate}
    \item \label{enum:vertex_conformity_dof}
      The quantities
      \begin{align*}
       U & \mapsto \bn_{V,2}^T U(V) (\bn_{V,1} \times \bn_{V,3}) + \bn_{V,3}^T U(V) (\bn_{V,1} \times \bn_{V,2}) \\
       U & \mapsto \bn_{V,3}^T U(V) (\bn_{V,2} \times \bn_{V,1}) + \bn_{V,1}^T U(V) (\bn_{V,2} \times \bn_{V,3}),
      \end{align*}
      where $\bn_{V,1}$, $\bn_{V,2}$, $\bn_{V,3}$ are the normal vectors of the three
      adjacent faces ($4 \times 2$ degrees of freedom),

    \item \label{enum:vertex_edge_dof}
      for each edge $E_i$, $i=1,2,3$, at $V$, the two quantities
      \begin{equation*}
       U \mapsto \bn_{E_i,1}^T U(V) \bt_{E_i}
       \qquad \text{and} \qquad
       U \mapsto \bn_{E_i,2}^T U(V) \bt_{E_i},
      \end{equation*}
      ($4 \times 3 \times 2 = 24$ degrees of freedom),

    \item
     the quantity
     \begin{equation*}
      U \mapsto \bn_{V,1}^TU(V)(\bn_{V,2} \times \bn_{V,3}) + \bn_{V,2}^T U(V) (\bn_{V,1} \times \bn_{V,3}) + \bn_{V,3}^T U(V)(\bn_{V,1} \times \bn_{V,2})
     \end{equation*}
     (4 degrees of freedom).

   \end{enumerate}

 \end{enumerate}
 Edge, face, and element degrees of freedom are as in Definition~\ref{def:simplex_element}.
\end{definition}

This makes $(k+1)^3 \times 9$ degrees of freedom in total, which equals the
dimension of the polynomial space $\Pi_k^\times(H; \R^{3 \times 3})$.
Unisolvence and conformity of this element are proved just as in
Theorems~\ref{thm:global_unisolvence} and~\ref{thm:global_conformity}.

In the definition of the global finite element space, local degrees of freedom are identified
across common vertices, edges or faces.

\begin{definition}
\label{def:finite_element_space_hexa}
 Let $\mathcal{T}$ be a grid such that at each vertex $V$, all adjacent faces are
 orthogonal to one of three unit vectors $\bn_{V,1}$, $\bn_{V,2}$, or $\bn_{V,3}$.
 The finite element space $H_{h,k}^\textup{sym\,Curl}$ is the set
 \begin{equation*}
  H_{h,k}^\textup{sym\,Curl}
  \colonequals
  \Big\{ U_h \in L^2(\Omega; \R^{3\times 3})
    \; : \;
    U_h \in \Pi_k^\times(H;\R^{3\times 3}) \quad \forall H \in \mathcal{T}
  \Big\},
 \end{equation*}
 with the restriction that
 \begin{itemize}
  \item vertex degrees of freedom of types~\ref{enum:vertex_conformity_dof})
       and~\ref{enum:vertex_edge_dof}) coincide for elements that share the vertex,
  \item edge degrees of freedom of types~\ref{enum:edge_edge_dof})
    and~\ref{enum:edge_face_dof}) coincide for elements that share the edge,
  \item face degrees of freedom of type~\ref{enum:face_dof_normal})
    coincide for elements that share the face.
 \end{itemize}
\end{definition}

This space differs from the one of Definition~\ref{def:finite_element_space} only
at the vertices.  To show that the new space is larger, simply count the degrees
of freedom at an inner grid vertex $V$.  By construction, eight elements meet at $V$.
Then Definition~\ref{def:finite_element_space} has 8 global vertex degrees of
freedom at $V$, and 8 further ones for the identity matrix components at $V$
of the 8 adjacent elements.
Definition~\ref{def:finite_element_space_hexa}, in contrast, has only two
global vertex degrees of freedom, the same 8 identity degrees of freedom,
and additionally two degrees of freedom for each of the six adjacent edges.
This makes 22 in total, in contrast to only 16 for Definition~\ref{def:finite_element_space}.

Using the same argument as in Chapter~\ref{sec:tetrahedral_element} we get conformity in $H(\sym \Curl)$
and $H(\Curl)$-nonconformity.

\begin{theorem}
 We have the subset relation
 \begin{equation*}
  H_{h,k}^\textup{sym\,Curl} \subset H(\sym \Curl) \subset H(\dev \sym \Curl).
 \end{equation*}
 On the other hand,
 for any $k \ge 1$ and any connected grid $\mathcal{T}$ with more than one element,
 there is a function $U_h \in H_{h,k}^\textup{sym\,Curl}$ such that $U_h \notin H(\Curl)$.
\end{theorem}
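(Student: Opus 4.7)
The plan is to follow the two-step pattern used at the end of Section~\ref{sec:tetrahedral_element}: first establish pointwise conformity on every inner face by combining the characterization Theorem~\ref{thm:jump_conditions} with the conformity lemmas of the present section, then exhibit a single explicit witness for $H(\Curl)$-nonconformity.

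For the inclusion $H_{h,k}^\textup{sym\,Curl}\subset H(\sym\Curl)$, I would mimic Theorem~\ref{thm:global_conformity}. Fix an inner face $F$ with normal $\bn$ shared by two hexahedra. The jump $\sym([U_h]\Anti\bn)$ is, on $F$, a polynomial in the face coordinates of the same degree as $U_h$, and hence is determined by its values at the Lagrange nodes of $F$. At interior face nodes, the type~\ref{enum:face_dof_normal}) DOFs are identified by Definition~\ref{def:finite_element_space_hexa} so that Lemma~\ref{lem:face_degree_conformity} gives vanishing from both sides. At interior edge nodes of an edge of $F$, the shared type~\ref{enum:edge_edge_dof}) and~\ref{enum:edge_face_dof}) DOFs and Lemma~\ref{lem:edge_conformity} play the same role. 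At vertices of $F$, the shared type~\ref{enum:vertex_conformity_dof}) and~\ref{enum:vertex_edge_dof}) DOFs, combined with the hexahedral vertex conformity lemma of the present section, finish the job. Hence $\sym([U_h]\Anti\bn)\equiv 0$ on $F$, and Theorem~\ref{thm:jump_conditions} yields $U_h\in H(\sym\Curl)$. The further inclusion into $H(\dev\sym\Curl)$ was already recorded in the introduction.

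For the non-conformity assertion, I would transfer the witness constructed at the end of Section~\ref{sec:tetrahedral_element}. Pick any single element $H\in\mathcal{T}$ and define $U_h$ to equal the identity matrix on $H$ and zero elsewhere. To confirm $U_h\in H_{h,k}^\textup{sym\,Curl}$, one checks that every DOF required to be shared by Definition~\ref{def:finite_element_space_hexa} vanishes on the identity. The type~\ref{enum:vertex_edge_dof}) and type~\ref{enum:edge_edge_dof}) DOFs reduce to $\bn_{E_i,j}^T \bt_{E_i}$, which is zero because $\bn_{E_i,1},\bn_{E_i,2}$ span the normal space of $\bt_{E_i}$. The DOFs of types~\ref{enum:edge_face_dof}), \ref{enum:face_dof_normal}) and~\ref{enum:vertex_conformity_dof}) are linear combinations of triple products $\ba^T(\bn\times\bc)$; each individual product vanishes when two of the three arguments coincide, while in the paired DOFs the two summands cancel under cyclic invariance of the triple product. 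Hence the only nonzero DOFs of $U_h$ are the identity-controlling vertex and edge DOFs, the type~\ref{enum:face_dof_tangential}) DOFs, and the interior DOFs, none of which are subject to coupling. Finally, on any face of $H$ with normal $\bn$ the jump satisfies $[U_h]\Anti\bn=\Anti\bn\neq 0$, so Theorem~\ref{thm:jump_conditions} prevents $U_h$ from lying in $H(\Curl)$.

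The main obstacle is pure bookkeeping rather than any deep calculation: one must trace, for each Lagrange point on a shared face or edge, which shared DOFs land on which hypothesis of which earlier lemma. The geometric assumption of Section~\ref{sec:hexahedral_elements}, namely that at every vertex $V$ all adjacent faces share the same three normals $\bn_{V,1},\bn_{V,2},\bn_{V,3}$, is the single ingredient that makes the type~\ref{enum:vertex_conformity_dof}) DOFs well-defined global quantities and hence makes the vertex part of the conformity argument go through.
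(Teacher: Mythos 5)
Your proposal is correct and follows exactly the route the paper intends: the inclusion is the hexahedral analogue of Theorem~\ref{thm:global_conformity} combined with Theorem~\ref{thm:jump_conditions} (using the new vertex conformity lemma at the vertices), and the nonconformity witness is the same identity-matrix-on-one-element function, with the shared DOFs vanishing by the same triple-product cancellations. Nothing is missing.
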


Optimal interpolation error bounds follow along the standard arguments as for the
tetrahedral element in Chapter~\ref{sec:tetrahedral_element}.

\printbibliography

\end{document}